\let\oldmarginpar\marginpar
\renewcommand\marginpar[1]{\-\oldmarginpar[\raggedleft\footnotesize #1]%
{\raggedright\footnotesize #1}}
\newtheorem{theorem}{Theorem}[section]
\newtheorem{corollary}[theorem]{Corollary}
\newtheorem{proposition}[theorem]{Proposition}
\theoremstyle{definition}
\newtheorem{definition}[theorem]{Definition}
\newtheorem{remark}[theorem]{Remark}
\newcommand{\Cc}{{\mathscr C}}
\newcommand{\Dd}{{\mathscr D}}
\newcommand{\Ee}{{\mathscr E}}
\newcommand{\Bb}{{\mathscr B}}
\newcommand{\Mm}{{\mathscr M}}
\newcommand{\Aa}{{\mathscr A}}
\newcommand{\Gg}{{\mathscr G}}
\newcommand{\Z}{{\mathbb Z}}
\newcommand{\op}{{\rm op}}
\newcommand{\Hom}{{\rm Hom}}
\newcommand{\llim}{{\rm lim}}
\newcommand{\clim}{{\rm colim}}
\newcommand{\Ext}{{\rm Ext}}
\newcommand{\Tor}{{\rm Tor}}
\newcommand{\Mor}{{\rm Mor}}
\newcommand{\h}{{\mathcal H}}
\newcommand{\Ner}[1]{{\mathcal N}(#1)}
\def\thomnat{\mathfrak{NatS}^{Th}(\Aa)}
\def\cochain{\mathfrak{CoChn}(\Aa)}
\def\thomcontra{\mathfrak{NatS}_{Th}(\Aa)}
\def\chain{\mathfrak{Chn}(\Aa)}
\newcommand{\Set}{{\mathfrak{Sets}}}
\newcommand{\Cat}{{\mathfrak{Cat}}}
\newcommand{\Fun}{{\mathfrak{Fun}}}
\newcommand{\PsdFun}{{\mathfrak{PsdFun}}}
\newcommand{\Fib}{{\mathfrak{Fib}}}
\newcommand{\Ab}{{\mathfrak{Ab}}}
\newcommand{\Mod}{{\mathfrak{Mod}}}
\begin{document}
\def\ead{\email}

\title{Thomason cohomology of categories
}

\author{Imma G\'alvez-Carrillo}
\address{Departament de Matem\`atica Aplicada III,
      Universitat Polit\`ecnica de Cata\-lunya\\ Escola d'Enginyeria de Terrassa, Carrer Colom 1, 08222 Terrassa (Bar\-celona), Spain}
\ead{m.immaculada.galvez@upc.edu}

\author{Frank Neumann}
\address{Department of Mathematics, University of Leicester\\
University Road, Leicester LE1 7RH, England, United Kingdom}
\ead{fn8@mcs.le.ac.uk}

\author{Andrew Tonks
}
\address{STORM, London Metropolitan University\\
166--220 Holloway Road, London N7 8DB, United Kingdom}
\ead{a.tonks@londonmet.ac.uk}

\begin{abstract}
We introduce cohomology and homology theories for small categories with general coefficient systems from simplex categories first studied by Thomason. These theories generalize at once Baues-Wirsching cohomology and homology and other more classical theories. We analyze naturality and functoriality properties of these theories and construct associated spectral sequences for functors between small categories. 
\end{abstract}

\keywords{Cohomology and homology theories for small categories, Baues-Wirsching cohomology, Thomason natural systems, Grothendieck fibrations}
 
\maketitle
\vspace*{-0.3cm}
\section*{Introduction}

In this article we introduce a new cohomology theory for small categories with very general coefficient systems. For any small category $\Cc$ we   define a cohomology theory with coefficients being functors from the simplex category $\Delta/\Cc$ to abelian categories.~Functors of this kind were studied by Thomason in his notebooks in order to define very general notions of homotopy limits and to analyze their functoriality properties \cite{Wei2}.~It turns out that these general coefficient systems, which we will call {\it Thomason natural sytems}, provide also a systematic way to study the cohomology of small categories, especially with respect to general naturality and functoriality properties.~This cohomology theory, called {\it Thomason cohomology} here, generalizes Baues-Wirsching cohomology, whose coefficients instead are functors from the factorization category $F\Cc$ to abelian categories.~Baues-Wirsching cohomology was introduced in \cite{BW} to study systematically linear extensions of categories appearing in algebra and homotopy theory. ~Dually, we also introduce Thomason homology for small categories using contravariant Thomason natural systems generalizing Baues-Wirsching homology as introduced by the authors in \cite{GNT}. The naturality and functoriality of these theories using functors from simplex categories as coefficients immediately extend the functoriality properties of Baues-Wirsching theories as discussed before in \cite{BW}, \cite{Mu}, \cite{pr} and \cite{GNT}.

In the first part we will introduce Thomason natural systems as functors from the simplex category of a given small category into an abelian category, define the cohomology and homology theories with respect to these general coefficient systems and analyze their basic homological properties.~In the second part we then compare these cohomology and homology theories with other theories studied before in the literature. In particular we will discuss how Thomason cohomology generalizes Baues-Wirsching cohomology and therefore also other classical theories like Hochschild-Mitchell cohomology and cohomology of categories with local and constant coefficient systems. We then analyze the functoriality of Thomason cohomology and homology with respect to a given functor between small categories and construct Leray type spectral sequences in particular situations, including the cases when the functor is part of an adjoint pair and when it is a Grothendieck fibration. This uses the general constructions of spectral sequences for cohomology and homology of small categories as developed systematically in \cite{GNT}.~As a bonus, naturality and functoriality properties of Baues-Wirsching cohomology and other classical cohomology theories of small categories will follow almost immediately. 

Though we will be working here exclusively within the categorical framework, it is possible by using the identification of the simplex category $\Delta/\Cc$ of $\Cc$ with the category of simplices $\Delta/\Ner \Cc$ of the simplical nerve $\Ner \Cc$ to relate Thomason cohomology and homology with the cohomology and homology of simplicial sets and associated Leray-Serre spectral sequences as discussed in \cite{GZ}, \cite{BK}, \cite{Dr} and \cite{I}. The categorical framework presented here has the advantage of being more direct and not relying on any homotopical framework, therefore providing a purely categorical framework for the study of cohomology of simplicial sets.~A systematic study of the relationship between the categorical framework and the simplicial constructions will appear in a sequel to this article.

\section{Thomason (co)homology of categories}

\subsection{Categories of simplices}

Recall that the {\it standard simplex category} $\Delta$ is the category whose objects are the nonempty finite ordered sets $[m]=\{0<1<\cdots<m\}$ and morphisms are order preserving functions. We may regard each $[m]$ as a category and $\Delta$ as the corresponding full subcategory of the category $\Cat$ of small categories.

\begin{definition} 
The {\it simplex category} $\Delta/\Cc$ of a small category $\Cc$ is the category whose object set is the set of pairs $([n],f)$, where $[n]$ is an object of $\Delta$ and $f:[n]\to \Cc$ is a functor, and whose morphisms $([n],f)\longrightarrow ([m],g)$ are morphisms $\sigma:[n]\to[m]$ of $\Delta$ with $f=g\circ \sigma$.
\end{definition}

Thus objects $([n],f)$ of $\Delta/\Cc$ can be interpreted as elements of the simplicial nerve $\Ner \Cc$ of $\Cc$. We will often omit the $[n]$ from the notation and 
regard objects as diagrams
$$f=(C_0\stackrel{f_1}\longleftarrow C_1\stackrel{f_2}\longleftarrow\cdots \stackrel{f_n}\longleftarrow C_n)$$
The morphisms of $\Delta/\Cc$ are generated by omitting or repeating objects $C_i$ in such diagrams.

Equivalently the simplex category of $\Cc$ is the Grothendieck construction of the contravariant diagram of discrete categories given by the simplicial nerve $\Ner \Cc$ of $\Cc$, 
$$\Delta/\Cc\;\;\cong\;\;\int_{\Delta^\op}\Ner\Cc\,\,\,\,\text{where}\,\,\,\,\Ner \Cc:\Delta^\op\to \Set\to\Cat.$$

More generally, if $X$ is an arbitrary simplicial set then its {\em simplex category} $\Delta/X$ is defined as the category whose objects are pairs $([n],x)$ for $x\in X_n$ and whose morphisms $([n],x)\to ([m],y)$ are morphisms  $\sigma:[n]\to[m]$ of $\Delta$ with $x=\sigma^*y$.  The simplex category $\Delta/\Cc$ of a small category $\Cc$ can therefore be identified with the simplex category $\Delta/\Ner \Cc$ of the nerve $\Ner \Cc$. It was shown by Latch~\cite{L} that the functor $\Delta/-: \Delta^{op}\Set\rightarrow \Cat$ from simplicial sets to small categories is homotopy inverse to the nerve functor ${\mathcal N}: \Cat\rightarrow \Delta^{op}\Set$. Therefore constructions using the simplex category $\Delta/\Cc$ correspond to constructions with the category of simplices $\Delta/\Ner \Cc$. 

\subsection{(Co)homology of categories with Thomason natural systems as coefficients}

We define cohomology and homology theories for small categories with very general coefficient systems introduced by Thomason
to define abstract notions of homotopy limits and colimits \cite{Wei2}.

\begin{definition}\label{def:structure}
Let $\Mm$ be a category. A functor $T: \Delta/\Cc\rightarrow \Mm$ is called a {\it Thomason natural system with values in} $\Mm$. 
If $([n],g\circ\sigma)\to([m],g)$ is a morphism in $\Delta/\Cc$ for some
$\sigma:[n]\to [m]$ in $\Delta$, then
$$\sigma_*:T(g\circ \sigma)\to T(g)$$
denotes the induced morphism in $\Mm$.
\end{definition}

Here we will mainly consider Thomason natural systems with values in an abelian category $\Aa$. Particular cases include the category $\Ab$ of abelian groups and the abelian category $R$-$\Mod$ of (left) $R$-modules for a ring $R$.\\

We define now the Thomason cohomology of a small category as follows:

\begin{definition}\label{bwcochn}
Let $\Cc$ be a small category and let $T: \Delta/\Cc\rightarrow \Aa$ be a Thomason natural system with values in a complete abelian category $\Aa$ with exact products. 
We define the {\it Thomason cochain complex} $C_{Th}^*(\Cc, T)$
 by
$$C_{Th }^n(\Cc, T):= \prod_{([n],f)\in \Delta/\Cc} T(f)$$
for each integer $n\geq 0$, with the differential
$$
d=\sum_{i=0}^{n+1}(-1)^i d_i:
\prod_{[n]\stackrel f\to\Cc}
  T(f) \longrightarrow 
\prod_{[n+1]\stackrel g\to\Cc} \!\! T(g).$$
Here the morphism $d_i$ is induced by the coface maps $\delta^i:[n]\to[n+1]$,
$$d_i\biggl((a_{\!f})_{[n]\stackrel f\to\Cc}\biggr)= \biggl(
{\delta^i}_*(a_{\!g\circ\delta^i})
\biggr)_{[n+1]\stackrel g\to\Cc}$$
on all sequences of elements $a_{\!f}\in T(f)$.

The {\it $n$-th Thomason cohomology} is defined as the cohomology of this complex,
$$H^n_{Th}(\Cc, T):=H^n(C_{Th}^*(\Cc, T), d).$$
\end{definition}
Thomason cohomology can also be expressed as the cohomology of the cosimplicial  replacement of the functor $T:\Delta/\Cc\to\Aa$. Recall from \cite{Wei2} that the {\em cosimplicial replacement} of any functor $T:\Delta/\Cc\to\Mm$ is defined as the cosimplicial object  ${\prod}^*T:\Delta\to\Mm$ with
$${\prod}^nT=\prod_{[n]\stackrel f\to\Cc} T(f).$$
For a morphism $\sigma:[n]\to[m]$ in $\Delta$, the 
cosimplicial structure map ${\prod}^nT\to{\prod}^mT$ is defined by requiring that the component corresponding to $g:[m]\to\Cc$ is the projection onto the component corresponding to $g\circ\sigma:[n]\to\Cc$ 
followed by $\sigma_*:T(g\circ\sigma)\to T(g)$.
When $\Mm$ is an abelian category, cohomology of a cosimplicial object is that of the corresponding cochain complex. 

For any abelian category $\Aa$, let $\thomnat$ be 
the category whose objects are the Thomason natural systems $T:\Delta/\Cc\to\Aa$ with values in $\Aa$, in which a morphism $(\varphi,\tau):T_1\to T_2$ between Thomason natural systems $\Delta/\Cc_i\stackrel{T_i}\longrightarrow\Aa$ consists of a functor $\varphi:\Cc_2\to \Cc_1$ and a natural transformation $\tau:T_1\circ\Delta/\varphi\longrightarrow T_2$. The composition of morphisms is indicated in the following diagram,
$$
\xymatrix@R3em@C5em{\Delta/\Cc_1\drto_{T_1}&\lto_{\Delta/\varphi} \dto|(0.45){\stackrel\tau\Longrightarrow\quad T_2\quad\stackrel\upsilon\Longrightarrow}\Delta/\Cc_2&\lto_{\Delta/\psi} \Delta/\Cc_3\dlto^{T_3}\\&\Aa}
$$   
Then the Thomason cochain complex defines in fact a functor 
$$C_{Th}^*:\thomnat\to\cochain, \,\, C^*_{Th}(T):=C^*_{Th}(\Cc, T)$$ 
from the category of Thomason natural systems with values in the abelian category $\Aa$ to the category of cochain complexes of $\Aa$.  
The functor $C^*_{Th}$ is defined on objects as above, and on morphisms by
\begin{align*}C^*_{Th}(\varphi,\tau):C^*_{Th}(\Cc_1,T_1)&\longrightarrow C^*_{Th}(\Cc_2,T_2)\\
(a_f)_{[n]\stackrel f\to\Cc_1}
&\longmapsto
(\tau_g(a_{\varphi\circ g}))_{[n]\stackrel g\to\Cc_2}
\end{align*}
Thomason cohomology is then a functor from $\thomnat$ to the category of graded objects in $\Aa$.\\

Dually, we can define the Thomason homology of a small category: 
\begin{definition}
Let $\Cc$ be a small category and let $T: (\Delta/\Cc)^\op\rightarrow \Aa$ be a contravariant Thomason natural system with values in a cocomplete abelian category $\Aa$ with exact coproducts. 
We define the {\em Thomason chain complex} $C^{Th}_*(\Cc, T)$ by
$$C^{Th}_n(\Cc, T):= \bigoplus_{([n],f)\in\Delta/\Cc} T(f)$$
for each integer $n\geq 0$, with the differential
\begin{eqnarray*}
\bigoplus_{[n+1]\stackrel f\to\Cc}
  \!\!\!\!
T(f) &\stackrel d \longrightarrow &
\bigoplus_{[n]\stackrel g\to\Cc} \!\! T(g)\\
a_f&\mapsto&\sum_{i=0}^{n+1}(-1)^i(\delta^i)^*
(a_{f})
\end{eqnarray*}
where $(\delta^i)^*:T(f)\to T(f\delta^i)$ is induced by the coface map $\delta^i:[n]\to[n+1]$.
The {\it $n$-th Thomason homology} is defined as
$$H_n^{Th}(\Cc, T):=H_n(C^{Th}_*(\Cc, T), d).$$
\end{definition}

This can also be expressed as the homology of the simplicial replacement ${\coprod}_*T:\Delta^\op\to\Aa$ of the functor $T:(\Delta/\Cc)^\op\to\Aa$, where
$${\coprod}_nT=\bigoplus_{[n]\stackrel f\to\Cc} T(f)$$
and, for a morphism $\sigma:[m]\to[n]$ in $\Delta$, the simplicial structure map ${\coprod}_nT\to{\coprod}_mT$ 
is defined on the component corresponding to $f:[n]\to\Cc$  by $\sigma^*:T(f)\to T(f\circ \sigma)$.

Let $\thomcontra$ be the category with objects the {\it contravariant} Thomason natural systems $T:(\Delta/\Cc)^\op\to\Aa$, in which a morphism $(\varphi,\tau):T_1\to T_2$ is given by a functor $\varphi:\Cc_1\to\Cc_2$ together with a natural transformation $\tau:T_1\to T_2\circ\Delta/\varphi$. 
The composition of morphisms is indicated in the following diagram,
$$
\xymatrix@R3em@C5em{(\Delta/\Cc_1)^{op}\drto_{T_1}\rto^{\Delta/\varphi}& \dto|(0.45){\stackrel\tau\Longrightarrow\quad T_2\quad\stackrel\upsilon\Longrightarrow}(\Delta/\Cc_2)^{op}\rto^{\Delta/\psi}& (\Delta/\Cc_3)^{op}\dlto^{T_3}\\&\Aa}
$$   
The Thomason chain complex defines a functor
$$C^{Th}_*:\thomcontra\to \chain, \,\, C_*^{Th}(T):=C_*^{Th}(\Cc, T).$$
where for morphisms we define
$$C^{Th}_*(\varphi,\tau):C^{Th}_*(\Cc_1, T_1)\longrightarrow C^{Th}_*(\Cc_2, T_2)$$ 
using the maps  
$$
\tau_f:T_1(f)\longrightarrow T_2(\varphi\circ f).
$$

\subsection{Basic properties of Thomason (co)homology}

\def\symbhom{\overline\Hom}
\def\symbext{\Ext}
\def\symbtor{\Tor}

Let $T:\Delta/\Cc\to \Aa$ be a Thomason natural system with values in a complete abelian category $\Aa$ with exact products. There is a contravariant functor from $\Aa$ to the functor category of $\Ab$-valued Thomason natural systems, 
$$
\Hom_\Aa(-,T(-)):
\Aa\longrightarrow 
\Fun(\Delta/\Cc,\Ab)
^{op}, 
$$
defined on objects $A$ of $\Aa$ by the functors $\Hom_\Aa(A,T(-)):\Delta/\Cc\to\Ab$ and on morphisms $f: A\rightarrow B$ in $\Aa$ by the induced natural transformations $\eta_f: \Hom_\Aa(B,T(-))\rightarrow \Hom_\Aa(A,T(-))$.

The right adjoint of this functor is
$$\symbhom_{\Delta/\Cc}(-,T):
\Fun(\Delta/\Cc,\Ab)
^{op} \longrightarrow \Aa,
$$
and varying $T$ gives the bifunctor
$$\symbhom_{\Delta/\Cc}(-,-):
\Fun(\Delta/\Cc,\Ab)
^{op}
\times 
\Fun(\Delta/\Cc,\Aa)
\longrightarrow \Aa,
$$
termed the \emph{symbolic hom} functor (see \cite{Fr}, \cite[Remark 2.3]{Hu}). 
\begin{proposition}\label{symb} The symbolic hom functor satisfies:
\begin{enumerate}
\item
If $\Z\Hom_{\Delta/\Cc}(-,-):(\Delta/\Cc)^{op}\times \Delta/\Cc\rightarrow \Ab$ is the bifunctor sending  
$(f, g)$ to the free abelian group on the set $\Hom_{\Delta/\Cc}(f, g)$,
then there is a natural isomorphism
$$\symbhom_{\Delta/\Cc}({{\Z\Hom}}_{\Delta/\Cc}(f, -), T(-))\cong
T(f).
$$
\item
If $\underline \Z:\Delta/\Cc\to \Ab$ is the constant Thomason natural system,~i.~e. the functor with constant value the abelian group $\Z$, then there is a natural isomorphism
$$
\symbhom_{\Delta/\Cc}(\underline \Z,T)\cong {\lim}_{\Delta/\Cc} T.
$$
\end{enumerate}
\end{proposition}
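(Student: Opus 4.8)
The plan is to deduce both isomorphisms formally from the defining adjunction of the symbolic hom functor together with the Yoneda lemma, never appealing to an explicit formula for $\symbhom_{\Delta/\Cc}$. The starting point I would record is that, since $\symbhom_{\Delta/\Cc}(-,T)$ is right adjoint to $\Hom_\Aa(-,T(-))$, for every object $A$ of $\Aa$ and every $\Ab$-valued Thomason natural system $F\colon\Delta/\Cc\to\Ab$ there is an isomorphism
$$\Hom_\Aa\bigl(A,\symbhom_{\Delta/\Cc}(F,T)\bigr)\;\cong\;\Hom_{\Fun(\Delta/\Cc,\Ab)}\bigl(F,\Hom_\Aa(A,T(-))\bigr),$$
natural in $A$, in $F$ and in $T$. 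In each of the two cases it then suffices to evaluate the right hand side for the appropriate choice of $F$ and to conclude by the Yoneda lemma in $\Aa$.

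For part (1) I would take $F=\Z\Hom_{\Delta/\Cc}(f,-)$ and use that this is the free $\Ab$-valued functor on the representable functor $\Hom_{\Delta/\Cc}(f,-)\colon\Delta/\Cc\to\Set$, the free abelian group functor being left adjoint to the forgetful functor $\Ab\to\Set$ and this adjunction being inherited pointwise by the functor categories. Hence for any $G\colon\Delta/\Cc\to\Ab$ there are isomorphisms
$$\Hom_{\Fun(\Delta/\Cc,\Ab)}\bigl(\Z\Hom_{\Delta/\Cc}(f,-),G\bigr)\;\cong\;\Hom_{\Fun(\Delta/\Cc,\Set)}\bigl(\Hom_{\Delta/\Cc}(f,-),G\bigr)\;\cong\;G(f),$$
the second by the ordinary Yoneda lemma, and the whole chain respects the abelian group structures. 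Taking $G=\Hom_\Aa(A,T(-))$ gives $\Hom_\Aa(A,\symbhom_{\Delta/\Cc}(\Z\Hom_{\Delta/\Cc}(f,-),T))\cong\Hom_\Aa(A,T(f))$ naturally in $A$, and the Yoneda lemma in $\Aa$ upgrades this to the asserted isomorphism; naturality in $f$ (and in $T$) then follows by chasing through the construction.

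For part (2) I would similarly take $F=\underline\Z$, which is the free $\Ab$-valued functor on the terminal $\Set$-valued functor $\underline{*}$ with value a one-point set; thus $\Hom_{\Fun(\Delta/\Cc,\Ab)}(\underline\Z,G)\cong\Hom_{\Fun(\Delta/\Cc,\Set)}(\underline{*},G)$, and a natural transformation $\underline{*}\to G$ is precisely a compatible family of elements, i.e.\ an element of $\lim_{\Delta/\Cc}G$. Taking $G=\Hom_\Aa(A,T(-))$, using that $\Aa$ is complete so that $\lim_{\Delta/\Cc}T$ exists, and that $\Hom_\Aa(A,-)$ preserves limits, the right hand side becomes $\Hom_\Aa(A,\lim_{\Delta/\Cc}T)$, and the Yoneda lemma in $\Aa$ finishes the proof. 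Alternatively — the route I would probably prefer to flag — part (2) follows from part (1): by the $\Ab$-enriched co-Yoneda lemma $\underline\Z$ is canonically a colimit of the free representables $\Z\Hom_{\Delta/\Cc}(f,-)$, and the right adjoint $\symbhom_{\Delta/\Cc}(-,T)$ carries this colimit to the limit $\lim_{\Delta/\Cc}T$.

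The argument is essentially formal, so I do not expect any genuine computational obstacle. The step I would be most careful about is verifying that the Yoneda bijections are compatible with the abelian group structures, so that $\symbhom_{\Delta/\Cc}$ — which is pinned down only by its universal property — is correctly identified, and that all the isomorphisms above are natural in the variables claimed; both of these amount to unwinding the adjunctions and the Yoneda correspondence rather than to any actual calculation.
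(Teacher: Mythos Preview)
Your proof is correct and follows essentially the same approach as the paper: the paper also reduces both parts to the defining adjunction of $\symbhom_{\Delta/\Cc}$ together with the Yoneda lemma, computing $\Hom_\Aa(A,-)$ into each side and identifying the results. You have simply spelled out in more detail the intermediate steps (the free--forgetful adjunction between $\Ab$- and $\Set$-valued functors, and the identification of $\lim$ via $\Hom_\Aa(A,-)$ preserving limits) that the paper leaves implicit or defers to references.
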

\begin{proof}
These results are well known in the special case $\Aa=\Ab$, and the general case follows by standard arguments using the adjunction. For (1), morphisms from an object $A$ of $\Aa$ to the left hand side are in bijection with natural transformations between the two functors $\Z\Hom_{\Delta/\Cc}(f,-),\:\Hom_\Aa(A,T(-))):\Delta/\Cc\to\Ab$, which are in turn in bijection with the morphisms from $A$ to the right hand side (by Yoneda). The natural isomorphism in (1) follows. The proof of (2) is similar, and may be found for example in \cite[Lemma 4.3.1]{liege} or \cite{Ob}.
\end{proof}
The corresponding derived functors are therefore also naturally isomorphic (see \cite[III.1, III.7]{Ob}, \cite[Sect. 2]{Hu}),~i.~e.
$$
\symbext^n_{\Delta/\Cc}(\underline{\Z}, T) \cong{\lim}_{\Delta/\Cc}^n T.
$$

In the case $\Aa=\Ab$ this gives the usual Ext-functor for diagrams of abelian groups over $\Delta/\Cc$.

\begin{remark}\label{remten}
Though above we have worked for convenience only with Thomason natural systems, we can define for any small category $\Cc$ and a complete abelian category with exact products $\Aa$ the {\it symbolic hom functor}
$$\symbhom_{\Cc}(-,-):
\Fun(\Cc,\Ab)
^{op}
\times 
\Fun(\Cc,\Aa)
\longrightarrow \Aa.
$$
It will have the same properties as in Theorem \ref{symb}, especially we get an isomorphism
$$\symbhom_{\Cc}(\underline \Z, D)\cong {\lim}_{\Cc} D$$  
for the constant functor ${\underline \Z}: \Cc\rightarrow \Ab$ and for any functor $D: \Cc\rightarrow \Aa$,
which extends to an isomorphism of the associated derived functors, i. e.
$$
\symbext^n_{\Cc}(\underline{\Z}, D) \cong{\lim}_{\Cc}^n D.
$$

Dually, if $\Aa$ is a cocomplete abelian category with exact coproducts, we have the {\it symbolic tensor product functor}
$$-\overline{\otimes}_{\Cc}-: 
\Fun(\Cc^{op},\Ab)
\times 
\Fun(\Cc,\Aa)
\longrightarrow \Aa
.$$
We have an isomorphism
$$\underline{\Z}\overline{\otimes}_{\Cc}D\cong {\clim}^{\Cc}D,$$
which extends to an isomorphism of the associated derived functors
$$\symbtor_n^{\Cc}(\underline{\Z}, D)\cong {\clim}_n^{\Cc}D.$$


We refer the reader to \cite[III.1, III.7]{Ob} for the general theory and proofs (see also \cite[Sect. 2]{Hu}, \cite{ML1}).
\end{remark}

We can now characterize Thomason cohomology and homology of small categories as cohomology and homology of the simplex category $\Delta/\Cc$.

\begin{theorem}\label{THExt}
Let $\Aa$ be complete abelian category with exact products.
There are isomorphisms, natural in $\Cc$ and in $T: \Delta/\Cc\rightarrow \Aa$, such that
$$H^n_{Th}(\Cc, T)\cong
\symbext^n_{\Delta/\Cc}(\underline{\Z}, T) \cong {\lim}^n_{\Delta/\Cc}T
=H^n(\Delta/\Cc, T).$$
\end{theorem}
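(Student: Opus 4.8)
The plan is first to reduce to a single new statement. The isomorphism $\symbext^n_{\Delta/\Cc}(\underline{\Z},T)\cong{\lim}^n_{\Delta/\Cc}T$ has just been recorded as a consequence of Proposition~\ref{symb}, and $H^n(\Delta/\Cc,T)$ is by definition ${\lim}^n_{\Delta/\Cc}T$; so the whole content is the identification $H^n_{Th}(\Cc,T)\cong\symbext^n_{\Delta/\Cc}(\underline{\Z},T)$. To prove it I would construct an explicit projective resolution $P_\bullet\to\underline{\Z}$ of the constant Thomason natural system in $\Fun(\Delta/\Cc,\Ab)$ for which $\symbhom_{\Delta/\Cc}(P_\bullet,T)$ is exactly the Thomason cochain complex $C^*_{Th}(\Cc,T)$ of Definition~\ref{bwcochn}. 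Since $\symbext^n_{\Delta/\Cc}(\underline{\Z},T)$ is computed as $H^n\bigl(\symbhom_{\Delta/\Cc}(Q_\bullet,T)\bigr)$ for any projective resolution $Q_\bullet\to\underline{\Z}$ (any two being canonically chain homotopy equivalent), the desired isomorphism then drops out.

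For the resolution I would take, in degree $n$,
$$P_n \;=\; \bigoplus_{[n]\stackrel f\to\Cc}\Z\Hom_{\Delta/\Cc}\bigl(([n],f),-\bigr),$$
with boundary $\partial_n:P_n\to P_{n-1}$ whose restriction to the summand indexed by $f:[n]\to\Cc$ is the alternating sum $\sum_{i=0}^n(-1)^i{\delta^i}_*$ of the natural transformations induced by the coface morphisms $\delta^i:([n-1],f\circ\delta^i)\to([n],f)$ of $\Delta/\Cc$, and with augmentation $\varepsilon:P_0\to\underline{\Z}$ sending each generator of $\Hom_{\Delta/\Cc}(([0],c),-)$ to $1\in\Z$. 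The cosimplicial identities among the $\delta^i$ give $\partial_{n-1}\partial_n=0$ and $\varepsilon\partial_1=0$. Each $P_n$ is projective, since $\Hom_{\Fun(\Delta/\Cc,\Ab)}\bigl(\Z\Hom_{\Delta/\Cc}(d,-),-\bigr)$ is evaluation at $d$, hence exact, and a coproduct of projectives is projective. For exactness of the augmented complex it suffices to evaluate at an arbitrary object $([m],g)$ of $\Delta/\Cc$: a morphism $([n],f)\to([m],g)$ is precisely a map $\sigma:[n]\to[m]$ in $\Delta$ (with $f=g\circ\sigma$ then forced), so evaluation identifies the augmented complex $P_\bullet(([m],g))\to\Z$ with the augmented unnormalised chain complex of the nerve of the category $[m]$, which in degree $n$ is $\Z\Hom_\Delta([n],[m])$ with differential the alternating sum of the coface precompositions; as $[m]$ has a terminal object this nerve is contractible, so that complex is acyclic with $H_0=\Z$. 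This last point is the only non-formal input and is classical.

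It remains to apply $\symbhom_{\Delta/\Cc}(-,T)$. Being a right adjoint, it carries colimits of Thomason natural systems to limits in $\Aa$, so it carries $P_n$ to $\prod_{[n]\stackrel f\to\Cc}\symbhom_{\Delta/\Cc}\bigl(\Z\Hom_{\Delta/\Cc}(([n],f),-),T\bigr)$, and by Proposition~\ref{symb}(1) each factor is $T(f)$; hence $\symbhom_{\Delta/\Cc}(P_n,T)\cong C^n_{Th}(\Cc,T)$. Unwinding the Yoneda identifications, the transformation induced by $\delta^i:([n-1],f\circ\delta^i)\to([n],f)$ becomes, under $\symbhom_{\Delta/\Cc}(-,T)$, precisely the structure map ${\delta^i}_*:T(f\circ\delta^i)\to T(f)$ of Definition~\ref{def:structure}, so $\symbhom_{\Delta/\Cc}(\partial_n,T)$ equals the Thomason differential $d=\sum_i(-1)^i d_i$; therefore $\symbext^n_{\Delta/\Cc}(\underline{\Z},T)=H^n\bigl(\symbhom_{\Delta/\Cc}(P_\bullet,T)\bigr)=H^n_{Th}(\Cc,T)$. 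Naturality in $\Cc$ and $T$ is routine: a morphism $(\varphi,\tau):T_1\to T_2$ in $\thomnat$ induces a map of resolutions compatible with $\Delta/\varphi$, which under $\symbhom_{\Delta/\Cc}(-,-)$ recovers the cochain map $C^*_{Th}(\varphi,\tau)$, and the cited isomorphism $\symbext^n_{\Delta/\Cc}(\underline{\Z},-)\cong{\lim}^n_{\Delta/\Cc}(-)$ is natural by construction. The step I expect to be the real work is this last bookkeeping: one must keep careful track of the Yoneda correspondence between morphisms of $\Delta/\Cc$ and transformations of representable Thomason natural systems, so that $\partial_n$, assembled from the $\delta^i:([n-1],f\circ\delta^i)\to([n],f)$, induces under $\symbhom_{\Delta/\Cc}(-,T)$ exactly the maps ${\delta^i}_*$ with the correct variance, making the induced cochain differential literally $\sum_i(-1)^i d_i$ rather than its formal dual; everything else — projectivity, the simplicial identities, and the passage from coproducts to products — is formal, and the one geometric ingredient, acyclicity of the chain complex of a standard simplex, is classical.
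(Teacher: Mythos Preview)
Your proof is correct and follows essentially the same route as the paper: the resolution $P_\bullet$ you build is exactly the paper's generalised bar resolution $B_*=\coprod_*\Z\Hom_{\Delta/\Cc}$, your pointwise exactness check via the contractibility of $\Ner[m]$ is the paper's identification $B_*(g)\cong\Z\Delta[m]$, and your application of $\symbhom_{\Delta/\Cc}(-,T)$ together with Proposition~\ref{symb}(1) is precisely how the paper obtains ${\prod}^*T$. The only difference is presentational: the paper packages the resolution as a simplicial replacement and the result as an isomorphism with the cosimplicial replacement, whereas you work directly at the level of chain complexes and spell out the differential matching and naturality in more detail.
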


\begin{proof}
We construct a chain complex $B_*=\{B_n, d_n\}$, the {\it generalized bar resolution} of the constant functor $\underline{\Z}$ in the functor category $\Fun(\Delta/\Cc, \Ab)$.

Let $B_*=\coprod_*{{\Z\Hom}}_{\Delta/\Cc}$ be the simplicial replacement of the functor $${{\Z\Hom}}_{\Delta/\Cc}: (\Delta/\Cc)^{op}\rightarrow \Fun(\Delta/\Cc, \Ab), \quad f\mapsto {{\Z \Hom}}_{\Delta/\Cc}(f, -).$$ 
so that, by definition of the simplicial replacement, we have
$$B_n={\coprod}_n {{\Z\Hom}}_{\Delta/\Cc}=\bigoplus_{[n]\stackrel{f}\rightarrow \Cc}{{\Z\Hom}}_{\Delta/\Cc}(f, -): \Delta/\Cc\rightarrow \Ab.$$

\noindent We have to show the following:
\begin{enumerate}
\item If $T: \Delta/\Cc\rightarrow \Aa$ is a Thomason natural system and  ${\prod}^*T:\Delta\to\Aa$ is its cosimplicial replacement, there is a natural isomorphism $$\symbhom_{\Delta/\Cc}(B_*, T)\;\cong \;{\prod}^*T.$$ 
\item The complex $B_*$ is a free resolution of the constant Thomason natural system $\underline{\Z}:\Delta/\Cc\to\Ab$.
\end{enumerate}
It will then follow that
$$
\symbext^*_{\Delta/\Cc}(\underline{\Z},T)\cong
H^*(\symbhom_{\Delta/\Cc}(B_*,T))
\cong H^*({\textstyle\prod}^*T)= H^*_{Th}(\Cc,T)
$$
as required.

The first part $(1)$ holds since
$$
\symbhom_{\Delta/\Cc}(B_n, T)\;\cong\! 
\prod_{[n]\stackrel{f}\rightarrow \Cc} \symbhom_{\Delta/\Cc}({{\Z\Hom}}_{\Delta/\Cc}(f, -), T(-))\;\cong\!
\prod_{[n]\stackrel{f}\rightarrow \Cc}T(f).
$$
For the second part (2) we observe that
each $B_n(g)$, for $g:[m]\to\Cc$ is the free abelian group on the set 
$\bigcup_{[n]\stackrel{f}\rightarrow \Cc}\Hom_{\Delta/\Cc}(f, g)\cong \Delta([n],[m])$.
That is, each $B_*(g)$ is the contractible simplicial abelian group $\Z\Delta[m]$. Thus $B_*$ is projective and weakly equivalent to $\underline\Z$ in the functor category
$\Fun(\Delta/\Cc, \Ab)$.
\end{proof}

Dually, we have the following identification of Thomason homology of small categories:

\begin{theorem}\label{THTor}
Let $\Aa$ be cocomplete abelian category with exact coproducts.
There are isomorphisms, natural in $\Cc$ and in $T: (\Delta/\Cc)^{op}\rightarrow \Aa$, 
such that
$$H_n^{Th}(\Cc, T)\cong \symbtor_n^{(\Delta/\Cc)^{op}}(\underline{\Z}
, T) \cong{\clim}_n^{(\Delta/\Cc)^{op}}T=H_n((\Delta/\Cc)^{op}, T).$$
\end{theorem}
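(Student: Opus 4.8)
The plan is to dualize the proof of Theorem~\ref{THExt} throughout, replacing products by coproducts, cosimplicial replacements by simplicial replacements, and $\symbhom$ by $\overline{\otimes}$. Concretely, I would construct the same generalized bar resolution $B_*=\coprod_*\Z\Hom_{\Delta/\Cc}$ in the functor category $\Fun(\Delta/\Cc,\Ab)$, viewing $B_n=\bigoplus_{[n]\stackrel f\to\Cc}\Z\Hom_{\Delta/\Cc}(f,-)$, and note that by the computation already performed in the proof of Theorem~\ref{THExt} each $B_*(g)$ is the contractible simplicial abelian group $\Z\Delta[m]$, so $B_*$ is a projective (hence flat) resolution of the constant Thomason natural system $\underline\Z:\Delta/\Cc\to\Ab$. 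Since the resolution lives in $\Fun(\Delta/\Cc,\Ab)$ but our coefficient functor is contravariant, $T:(\Delta/\Cc)^{op}\to\Aa$, I would pair it against $B_*$ using the symbolic tensor product $-\overline\otimes_{(\Delta/\Cc)^{op}}-$ from Remark~\ref{remten} (with $\Cc$ there instantiated as $(\Delta/\Cc)^{op}$), whose defining property is $\underline\Z\overline\otimes_{(\Delta/\Cc)^{op}}T\cong\clim^{(\Delta/\Cc)^{op}}T$ and whose derived functors are $\symbtor_n^{(\Delta/\Cc)^{op}}(\underline\Z,T)$.

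The two facts to verify are then: (1) there is a natural isomorphism $B_*\overline\otimes_{(\Delta/\Cc)^{op}}T\cong\coprod_*T$, identifying the total complex with the simplicial replacement of $T$, and (2) $B_*$ is a flat resolution of $\underline\Z$, which is immediate from (2) of the proof of Theorem~\ref{THExt} since projective implies flat. For (1), I would argue degreewise using the analogue of Proposition~\ref{symb}(1) for the symbolic tensor product: the dual Yoneda-type identity gives
$$
\Z\Hom_{\Delta/\Cc}(f,-)\,\overline\otimes_{(\Delta/\Cc)^{op}}\,T\;\cong\;T(f),
$$
and since $\overline\otimes$ commutes with the coproducts defining $B_n$, we obtain
$$
B_n\overline\otimes_{(\Delta/\Cc)^{op}}T\;\cong\;\bigoplus_{[n]\stackrel f\to\Cc}\bigl(\Z\Hom_{\Delta/\Cc}(f,-)\overline\otimes_{(\Delta/\Cc)^{op}}T\bigr)\;\cong\;\bigoplus_{[n]\stackrel f\to\Cc}T(f)\;=\;{\coprod}_nT.
$$
One checks the simplicial structure maps match those of the simplicial replacement, exactly as in the cohomological case. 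It then follows that
$$
\symbtor_*^{(\Delta/\Cc)^{op}}(\underline\Z,T)\;\cong\;H_*\bigl(B_*\overline\otimes_{(\Delta/\Cc)^{op}}T\bigr)\;\cong\;H_*\bigl({\textstyle\coprod}_*T\bigr)\;=\;H_*^{Th}(\Cc,T),
$$
and combining with $\underline\Z\overline\otimes_{(\Delta/\Cc)^{op}}T\cong\clim^{(\Delta/\Cc)^{op}}T$ and its derived version from Remark~\ref{remten} yields the identification with $\clim_n^{(\Delta/\Cc)^{op}}T=H_n((\Delta/\Cc)^{op},T)$.

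Naturality in $\Cc$ and in $T$ follows because every construction in sight—$B_*$, the simplicial replacement, and the symbolic tensor product—is functorial; a morphism $(\varphi,\tau):T_1\to T_2$ in $\thomcontra$ induces compatible maps on the resolutions and on the $\coprod_*$ complexes, as already encoded in the definition of the functor $C^{Th}_*$. The main obstacle I anticipate is purely bookkeeping: being careful that the variance is handled correctly, since $B_*$ is a complex of \emph{covariant} functors on $\Delta/\Cc$ while $T$ is contravariant, so the pairing is the symbolic tensor product over $(\Delta/\Cc)^{op}$ rather than over $\Delta/\Cc$, and one must confirm that flatness of $B_*$ in $\Fun(\Delta/\Cc,\Ab)=\Fun(((\Delta/\Cc)^{op})^{op},\Ab)$ is exactly what is needed to compute $\symbtor$ over $(\Delta/\Cc)^{op}$. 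Apart from that, the argument is a routine dualization of Theorem~\ref{THExt}.
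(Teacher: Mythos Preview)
Your proposal is correct and takes essentially the same approach as the paper: the paper's own proof simply states that the result follows along the lines of Theorem~\ref{THExt} using the generalized bar resolution of $\underline{\Z}$ together with the dual notions of symbolic tensor product and derived Tor-functor from Remark~\ref{remten}. You have spelled out in detail exactly the dualization the paper leaves implicit, including the careful attention to variance in the pairing $B_*\overline\otimes_{(\Delta/\Cc)^{op}}T$.
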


\begin{proof} This follows along the same lines as Theorem \ref{THExt} using the generalized bar resolution of the constant functor 
$\underline{\Z}$ involving the dual notions, namely the symbolic tensor product functor and its derived Tor-functor as defined in Remark \ref{remten} for functors $T: (\Delta/\Cc)^{op}\rightarrow \Aa$.
\end{proof}

The following proposition describes the basic functoriality properties of Thomason cohomology with respect to a pair of adjoint functors between small categories:

\begin{proposition}\label{adjequiv}Let $\Cc$ and $\Dd$ be small categories, $\Aa$ a complete abelian category with exact products and $(\varphi, \psi)$ a pair of adjoint functors:
$$\varphi: \Dd \rightleftarrows \Cc :\psi.$$
Then for any Thomason natural system $T$ on $\Cc$ there is a natural isomorphism
$$H^*_{Th}(\Dd, \varphi^*(T))\cong H^*_{Th}(\Cc, T).$$
In particular, an equivalence of categories $\varphi:\Dd\to\Cc$ induces a natural isomorphism 
 $$H^*_{Th}(\Dd, \varphi^*(T))\cong H^*_{Th}(\Cc, T)$$
for any Thomason natural system $T$ on $\Cc$.
\end{proposition}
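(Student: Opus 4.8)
The plan is to reduce the statement to cohomology of simplex categories by means of Theorem~\ref{THExt}. That theorem provides natural isomorphisms $H^*_{Th}(\Cc,T)\cong\lim^*_{\Delta/\Cc}T$ and $H^*_{Th}(\Dd,\varphi^*T)\cong\lim^*_{\Delta/\Dd}(T\circ\Delta/\varphi)$, and, since these are natural in the coefficients, the morphism $(\varphi,\mathrm{id})\colon T\to\varphi^*T$ of $\thomnat$ induces under them precisely the comparison map $H^*_{Th}(\Cc,T)\to H^*_{Th}(\Dd,\varphi^*T)$ whose invertibility is to be proved. So it suffices to show that the functor $\Delta/\varphi\colon\Delta/\Dd\to\Delta/\Cc$ induces an isomorphism on $\lim^*$ of the coefficient system $T$; the case of an equivalence $\varphi$ will then follow at once, since an equivalence is both a left and a right adjoint of any chosen quasi-inverse.

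Next I would feed in the remaining adjunction data. The functor $\psi$ gives $\Delta/\psi\colon\Delta/\Cc\to\Delta/\Dd$, while the unit $\eta\colon\mathrm{id}_\Dd\Rightarrow\psi\varphi$ and counit $\epsilon\colon\varphi\psi\Rightarrow\mathrm{id}_\Cc$, regarded as functors $\Dd\times[1]\to\Dd$ and $\Cc\times[1]\to\Cc$, induce on nerves simplicial homotopies between $\Ner{\psi\varphi}$ and the identity of $\Ner{\Dd}$, and between $\Ner{\varphi\psi}$ and the identity of $\Ner{\Cc}$. Using the identification $\Delta/\Cc\cong\Delta/\Ner{\Cc}$ (and similarly for $\Dd$) these become functors $\Delta/(\Dd\times[1])\to\Delta/\Dd$ and $\Delta/(\Cc\times[1])\to\Delta/\Cc$ whose restrictions along the two end inclusions recover $\Delta/(\psi\varphi)$ and $\mathrm{id}$, respectively $\Delta/(\varphi\psi)$ and $\mathrm{id}$. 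Passing to the Thomason cochain complexes, i.e.\ to the cosimplicial replacements ${\prod}^*T$ and ${\prod}^*(\varphi^*T)$ — which are functorial for simplicial maps of nerves via the formula defining the cosimplicial structure — this cylinder data should unwind, through a prism (shuffle) decomposition of $\Ner{\Cc}\times\Delta[1]$ and $\Ner{\Dd}\times\Delta[1]$, into explicit homotopy operators witnessing that $C^*_{Th}(\varphi,\mathrm{id})$ and the cochain map built from $\psi$ together with the cylinder of $\epsilon$ are mutually inverse up to cochain homotopy. Taking cohomology then gives the desired natural isomorphism.

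The hard part will be exactly this last step. The unit and counit do \emph{not} descend to natural transformations between the functors induced on the simplex categories — a natural transformation $F\Rightarrow G\colon\Dd\to\Cc$ does not produce a natural transformation $\Delta/F\Rightarrow\Delta/G$ — and in fact $\Delta/\varphi$ need not be homotopy-initial (its comma categories over certain objects of $\Delta/\Cc$ can be empty), so one cannot simply quote the abstract principle that a functor possessing an adjoint preserves $\lim^*$. One is therefore forced to construct the backward cochain map and the homotopy operators by hand from the cylinder functors $\Cc\times[1]\to\Cc$ and $\Dd\times[1]\to\Dd$, and to check carefully their compatibility with the projections and with the induced maps $\sigma_*$ that define the cosimplicial structure maps; this bookkeeping is where the real work concentrates. (Alternatively, the isomorphism can be read off as the degeneration at $E_2$ of the Leray-type spectral sequence attached to the functor $\varphi$.)
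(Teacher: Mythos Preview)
Your route differs substantially from the paper's. The paper's argument is two lines: it asserts that the adjoint pair $(\varphi,\psi)$ lifts to an adjoint pair $(\Delta/\varphi,\Delta/\psi)$ between the simplex categories, and then invokes \cite[Lemma~1.5]{FFPS} (the standard fact that a functor with an adjoint induces an isomorphism on derived limits of pulled-back diagrams) together with Theorem~\ref{THExt}. You explicitly reject this step, and you are right to: the construction $\Cc\mapsto\Delta/\Cc$ is \emph{not} a $2$-functor, so the unit and counit of $(\varphi,\psi)$ do not yield natural transformations relating $\Delta/(\psi\varphi)$ to the identity, and one checks directly (take $\Dd=\mathbb{1}$ and $\Cc$ any category with an initial object, so that $\Delta/\varphi$ picks out the tower of degenerate simplices at that object) that neither $\Delta/\varphi\dashv\Delta/\psi$ nor the reverse adjunction holds. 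The paper's proof therefore contains precisely the gap you flag, and your more laborious approach --- transporting $\eta$ and $\epsilon$ to simplicial homotopies on the nerves and then unwinding the prism decomposition into explicit cochain-homotopy operators on the cosimplicial replacements $\prod^*T$ and $\prod^*(\varphi^*T)$ --- is the honest way through. What you gain is a correct argument; what you pay is the bookkeeping you already anticipate in constructing the backward cochain map and checking its compatibility with the structure maps $\sigma_*$.

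One caution about your parenthetical alternative: the Leray-type spectral sequence of Theorem~\ref{H^*T} attached to $\varphi$ will not degenerate for free, for exactly the reason you have already noted --- the comma categories $f/(\Delta/\varphi)$ can be empty, so the higher right Kan extensions $R^q(\Delta/\varphi)_*(\varphi^*T)$ need not vanish without further input. That route therefore does not sidestep the difficulty.
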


\begin{proof} We observe that the adjoint pair $(\varphi,  \psi)$ induces an adjoint pair $(\Delta/\varphi, \Delta/\psi)$ of functors between the associated simplex categories $\Delta/\Cc$ and $\Delta/\Dd$:
$$\Delta/\varphi: \Delta/\Cc\rightleftarrows \Delta/\Dd :\Delta/\psi.$$
The statement now follows from Theorem 1.5 above and \cite[Lemma 1.5, p. 10] {FFPS} with ${\mathbf C}=\Delta/\Cc$, ${\mathbf D}=\Delta/\Dd$ and $F=\underline{\Z}$ the constant Thomason natural system.

In particular, by \cite[Theorem IV.4.1]{ML2} an equivalence of categories $\varphi$ is part of an adjunction between functors and so the second statement follows. \end{proof}

Dually, using similar arguments, we have the following results regarding the basic functoriality of Thomason homology:

\begin{proposition}\label{adjequivhom}Let $\Cc$ and $\Dd$ be small categories, $\Aa$ a cocomplete abelian category with exact coproducts and $(\varphi, \psi)$  a pair of adjoint functors:
$$\varphi: \Dd \rightleftarrows \Cc :\psi.$$
Then for any contravariant Thomason natural system $T$ on $\Cc$ there is a natural isomorphism
$$H_*^{Th}(\Dd, \varphi^*(T))\cong H_*^{Th}(\Cc, T).$$
In particular, an equivalence of categories $\varphi:\Dd\to\Cc$ induces a natural isomorphism 
 $$H_*^{Th}(\Dd, \varphi^*(T))\cong H_*^{Th}(\Cc, T)$$
for any contravariant Thomason natural system $T$ on $\Cc$.
\end{proposition}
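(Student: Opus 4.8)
The plan is to deduce the statement from the cohomological Proposition~\ref{adjequiv} by passing to the opposite abelian category. Since $\Aa$ is cocomplete with exact coproducts, its opposite $\Aa^{op}$ is a complete abelian category with exact products, so Proposition~\ref{adjequiv} and Theorem~\ref{THExt} apply over $\Aa^{op}$. Given a contravariant Thomason natural system $T:(\Delta/\Cc)^{op}\to\Aa$, write $\tilde T:\Delta/\Cc\to\Aa^{op}$ for the corresponding covariant Thomason natural system with values in $\Aa^{op}$. Unwinding the definitions, the Thomason chain complex $C^{Th}_*(\Cc,T)$ in $\Aa$, read through the identification $\Aa=(\Aa^{op})^{op}$, \emph{is} the Thomason cochain complex $C^*_{Th}(\Cc,\tilde T)$ in $\Aa^{op}$: the coproducts $\bigoplus_{[n]\stackrel f\to\Cc}T(f)$ become products $\prod_{[n]\stackrel f\to\Cc}\tilde T(f)$, the degree-lowering differential of the chain complex becomes the degree-raising differential of the cochain complex (the maps $(\delta^i)^*:T(f)\to T(f\delta^i)$ becoming the cosimplicial structure maps ${\delta^i}_*$), and homology in $\Aa$ corresponds to cohomology in $\Aa^{op}$. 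Thus $H^{Th}_n(\Cc,T)$ corresponds to $H^n_{Th}(\Cc,\tilde T)$, naturally in $\Cc$ and $T$, and since $\widetilde{\varphi^*(T)}=\varphi^*(\tilde T)$ for the functor $\varphi:\Dd\to\Cc$, also $H^{Th}_*(\Dd,\varphi^*(T))$ corresponds to $H^*_{Th}(\Dd,\varphi^*(\tilde T))$.

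Applying Proposition~\ref{adjequiv} over $\Aa^{op}$ to the adjoint pair $(\varphi,\psi)$ and the coefficient system $\tilde T$ now gives a natural isomorphism $H^*_{Th}(\Dd,\varphi^*(\tilde T))\cong H^*_{Th}(\Cc,\tilde T)$, which translates back across the duality into the desired $H^{Th}_*(\Dd,\varphi^*(T))\cong H^{Th}_*(\Cc,T)$. The case of an equivalence of categories follows because by \cite[Theorem IV.4.1]{ML2} such a functor is part of an adjunction. Alternatively one can argue directly in $\Aa$, in complete parallel with Proposition~\ref{adjequiv}: the pair $(\varphi,\psi)$ induces an adjoint pair $(\Delta/\varphi,\Delta/\psi)$ on the simplex categories, hence an adjoint pair on their opposites, and one combines Theorem~\ref{THTor} with the Tor-analogue of \cite[Lemma 1.5, p. 10]{FFPS} (the homology invariance statement established in \cite{GNT}), applied with the constant coefficients $\underline{\Z}$.

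The step needing care is purely bookkeeping: one must check that ``exact coproducts in $\Aa$'' is precisely the hypothesis that makes $\Aa^{op}$ complete with exact products, and that under $\Aa=(\Aa^{op})^{op}$ the coface-induced maps of the chain complex of $T$ and of the cochain complex of $\tilde T$ match up with the correct variance, so that these two complexes genuinely coincide rather than merely being isomorphic up to signs or reindexing. Once this is in place, the conclusion is immediate from Proposition~\ref{adjequiv}.
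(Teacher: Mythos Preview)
Your proposal is correct. The paper does not give a separate proof of this proposition; it simply prefaces the statement with ``Dually, using similar arguments,'' meaning one is to rerun the proof of Proposition~\ref{adjequiv} with Theorem~\ref{THTor} in place of Theorem~\ref{THExt} and the $\Tor$/colimit analogue of the cited lemma from \cite{FFPS}. This is exactly your second, alternative argument.

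Your primary argument takes a slightly different route: rather than dualizing the \emph{proof} of Proposition~\ref{adjequiv}, you dualize the \emph{target category} and invoke the already-proved Proposition~\ref{adjequiv} itself over $\Aa^{op}$. This is a clean and legitimate way to make ``dually'' precise in one stroke, and it has the advantage of not requiring one to locate or formulate the $\Tor$-analogue of \cite[Lemma 1.5]{FFPS}. The bookkeeping you flag (exact coproducts in $\Aa$ $\leftrightarrow$ exact products in $\Aa^{op}$; the Thomason chain complex in $\Aa$ becoming the Thomason cochain complex in $\Aa^{op}$; homology $\leftrightarrow$ cohomology) is routine and goes through without sign or reindexing issues, since the simplicial and cosimplicial replacements are formal duals of one another. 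Both approaches are equivalent in strength; the paper's (your alternative) is perhaps more self-contained within the homological framework of the section, while yours avoids any repetition.
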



\section{Thomason (co)homology and spectral sequences}

\subsection{Thomason (co)homology in relation to other theories}
Thomason cohomology and homology defined in the first part generalize many other constructions of cohomology and homology for small categories in the literature. The coefficient systems of these theories can all be interpreted as particular cases of Thomason natural systems. 

Let us discuss the most important examples here (see also \cite{BW, GNT}).  Recall first that the
{\it factorization category} $F\Cc$ of a small category $\Cc$ is the category whose object set is the set of morphisms of $\Cc$ and whose Hom-sets $F\Cc(f, f')$ are the sets of pairs $(\alpha, \beta)$ such that $f'=\beta f\alpha$,
 $$
\xymatrix{b\rto^\beta &b'\\
a\uto^{f}& a'\uto_{f'}\lto^\alpha.}
$$

There is a functor $$\nu:\Delta/\Cc\to F\Cc$$  defined on objects and morphisms by
\begin{align*}
\nu
\left(
C_0\stackrel{f_1}\longleftarrow C_1\stackrel{f_2}\longleftarrow\cdots \stackrel{f_m}\longleftarrow C_m
\right)
&
= 
(
C_0\xleftarrow{f_1\circ\cdots\circ f_m} C_m)
\\
\nu
\left(([m],f)
\stackrel\sigma\longrightarrow ([n],g)\right)
&
= 
(
C_m\xleftarrow{g_{\sigma(m)+1}\circ\dots\circ g_n}D_n
,\,
D_0\xleftarrow{
g_{1}\circ\dots\circ g_{\sigma(0)}}C_0
)
\end{align*}
In the cases $\sigma(m)=n$ or $\sigma(0)=0$ the terms on the right hand side of the last equation are the identity maps $C_m\leftarrow D_n$ and $D_0\leftarrow C_0$ respectively. 

We now have the following diagram of six categories and functors between them, in which $\Cc$ is a fixed small category:
$$\Delta/\Cc\stackrel {\nu}\longrightarrow F\Cc \stackrel{\pi}\longrightarrow\Cc^{op}\times \Cc\stackrel{p}\longrightarrow \Cc\stackrel{q}\longrightarrow\pi_1\Cc\stackrel{t}\longrightarrow \mathbb{1}.$$
In this diagram, $\nu$ is the functor introduced above, $\pi$ is the forgetful functor, $p$ is the projection to the second factor and $q$ is the localization functor into the fundamental groupoid $\pi_1\Cc=(\Mor \Cc)^{-1}\Cc$ of $\Cc$ (see \cite{GZ}). Furthermore, $\mathbb{1}$ is the terminal category, consisting of one object and one morphism, and $t$ is the canonical functor.

Now let $\Aa$ be a (co)complete abelian category with exact (co)products, as above. 
Pre-composition with the functor $\nu$ induces a functor between functor categories
$$\nu^*: \Fun(F\Cc, \Aa)\rightarrow \Fun(\Delta/\Cc, \Aa),$$
and, more generally, pulling back functors from $\Fun(\Cc', \Aa)$ via the functors in the above diagram by pre-composition, where $\Cc'$ denotes any of the six categories in it, induces Thomason natural systems on the category $\Cc$. We can therefore define various versions of cohomology and homology of small categories for different types of coefficient systems, which can then all be seen as special cases of Thomason cohomology or homology. In some sense Thomason cohomology and homology are the most general cohomology and homology theories one can define for small categories as they generalize especially Baues-Wirsching cohomology and homology. It is known that Baues-Wirsching cohomology generalizes the classical cohomology theories studied for example by Watts \cite{Wa}, Mitchell \cite{Mi}, Quillen \cite{Q} and others (see \cite[Definition 1.18]{BW} and \cite[Definition 1.11]{GNT}). Particular examples include group and groupoid cohomology and homology.

\begin{definition}\label{DefBW}
Let $\Cc$ be a small category and $\Aa$ a complete abelian category with exact products for cohomology, or a cocomplete abelian category with exact coproducts for homology. Then:
\begin{itemize}
\item[(i)] $D$ is called a {\it Baues-Wirsching natural system} if $D$ is a functor of $\Fun(F\Cc, \Aa)$. Define the cohomology $H_{BW}^*(\Cc, D)=H^*_{Th}(\Cc, \nu^*D)$ and dually the homology $H^{BW}_*(\Cc, D)=H_*^{Th}(\Cc, \nu^{op*}D)$.
\item[(ii)] $M$ is called a {\it $\Cc$-bimodule} if $M$ is a functor of $\Fun(\Cc^{op}\times \Cc, \Aa)$. Define the cohomology $H_{HM}^*(\Cc, M)=H^*_{Th}(\Cc, \nu^*\pi^*M)$ and dually the homology $H^{HM}_*(\Cc, M)=H_*^{Th}(\Cc, \nu^{op*}\pi^{op*}M)$.
\item[(iii)] $F$ is called a {\it $\Cc$-module} if $F$ is a functor of $\Fun(\Cc, \Aa)$. Define the cohomology $H^*(\Cc, F)=H^*_{Th}(\Cc, \nu^*\pi^*p^*F)$ and dually the homology $H_*(\Cc, F)=H_*^{Th}(\Cc, \nu^{op*}\pi^{op*}p^{op*}F)$.
\item[(iv)] $L$ is called a {\it local system} on $\Cc$ if $L$ is a functor of $\Fun(\pi_1\Cc, \Aa)$. Define the cohomology $H^*(\Cc, L)=H^*_{Th}(\Cc, \nu^*\pi^*p^*q^*L)$ and dually the homology $H_*(\Cc, L)=H_*^{Th}(\Cc, \nu^{op*}\pi^{op*}p^{op*}q^{op*}L)$.
\item[(v)] $A$ is a {\it trivial system} on $\Cc$ if $A$ is an abelian group (resp. an object in $\Aa$), i.~e. a functor of $\Fun(\mathbb{1}, \Ab)$ (resp. a functor of  $\Fun(\mathbb{1}, \Aa)$). Define the cohomology $H^*(\Cc, A)=H^*_{Th}(\Cc, \nu^*\pi^*p^*q^*t^*A)$ and dually the homology $H_*(\Cc, A)=H_*^{Th}(\Cc, \nu^{op*}\pi^{op*}p^{op*}q^{op*}t^{op*}A)$.
\end{itemize}
\end{definition}

Note that the functors $\nu$, $\pi$, $p$, $q$, $t$ above form natural transformations between the different
endofunctors of $\Cat$, since for any 
functor $\varphi:\Dd\to\Cc$
we have the following commutative ladder:
$$
\xymatrix@C=14pt{\Delta/\Dd\rrto^-{\nu}\dto_{\Delta/\varphi}&&F\Dd\rrto^-{\pi}\dto^{F\varphi}&&\Dd^{op}\times \Dd\dto^{\varphi^{op}\times \varphi}\rrto^-{p}&& \Dd\dto^\varphi\rrto^{q}&&\pi_1\Dd\dto^{\pi_1\varphi}\rrto^{t}&&\mathbb{1}\ar@{=}[d]\\
   \Delta/\Cc\rrto^-{
{\nu}}&&              F\Cc\rrto^-{
{\pi}}               &&\Cc^{op}\times \Cc\rrto^-{
{p}}&&\Cc\rrto^-{
{q}}            &&\pi_1\Cc\rrto^-{
{t}}      &&\mathbb{1}.\!\!\!}
$$

The various cohomology and homology theories can now be identified with the ones known previously in the literature. We have the following result:

\begin{theorem} \label{ThmBW} 
Let $\Cc$ be a small category and $\Aa$ a complete abelian category with exact products. Then:
\begin{itemize}
\item[(i)] (Baues-Wirsching) For any natural system $D$ of $\Fun(F\Cc, \Aa)$ we have isomorphisms, natural in $\Cc$ and $D$:\\
$H_{BW}^n(\Cc, D)\cong \Ext^n_{\Delta/\Cc}(\underline{\Z}, \nu^*D)\cong \Ext^n_{F\Cc}(\underline{\Z}, D).$
\item[(ii)] (Hochschild-Mitchell) For any $\Cc$-bimodule $M$ of $\Fun(\Cc^{op}\times \Cc, \Aa)$ we have isomorphisms, natural in $\Cc$ and $M$:\\
$H_{HM}^n(\Cc, M)\cong \Ext_{\Delta/\Cc}^n(\underline{\Z}, \nu^*\pi^*M)\cong\Ext^n_{\Cc^{op}\times \Cc}(\underline{\Z}\Cc, M).$
\item[(iii)] For any $\Cc$-module $F$ of $\Fun(\Cc, \Aa)$ we have isomorphisms, natural in $\Cc$ and $F$:\\
$H^n(\Cc, F)\cong \Ext_{\Delta/\Cc}^n(\underline{\Z}, \nu^*\pi^*p^*F)\cong\Ext^n_{\Cc}(\underline{\Z}, F)\cong{\lim}^n_{\Cc} F.$
\item[(iv)] For any local system $L$ on $\Cc$ of $\Fun(\pi_1\Cc, \Aa)$ we have isomorphisms, natural in $\Cc$ and $L$:\\
$H^n(\Cc, L)\cong \Ext_{\Delta/\Cc}^n(\underline{\Z}, \nu^*\pi^*p^*q^*L)\cong \Ext^n_{\pi_1\Cc}(\underline{\Z}, L)\\\hspace*{1.7cm}\cong {\lim}^n_{\pi_1\Cc}L\cong H^n(B\Cc, L)$\\
where $B\Cc$ is the classifying space of the small category $\Cc$.
\end{itemize}
\end{theorem}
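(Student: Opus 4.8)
The plan is to deduce everything from Theorem~\ref{THExt} together with a short list of ``cohomological cofinality'' facts for the functors in the chain $\Delta/\Cc\xrightarrow{\nu}F\Cc\xrightarrow{\pi}\Cc^{\op}\times\Cc\xrightarrow{p}\Cc\xrightarrow{q}\pi_1\Cc$. The first isomorphism in each of (i)--(iv) comes for free: by Definition~\ref{DefBW} the left-hand sides are the Thomason cohomology groups $H^n_{Th}(\Cc,T)$ with $T$ the pullback of $D$, $M$, $F$ or $L$ along the relevant composite, and Theorem~\ref{THExt} identifies $H^n_{Th}(\Cc,T)\cong\Ext^n_{\Delta/\Cc}(\underline\Z,T)$. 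So the real content is that precomposition with each of $\nu$, $\pi$, $p$, $q$ leaves the corresponding $\Ext$-group unchanged, the one wrinkle being that the trivial coefficient object $\underline\Z$ over $F\Cc$ must be traded for $\Z\Cc$ over $\Cc^{\op}\times\Cc$.

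The mechanism I would use is that for any functor $u\colon\mathbf D\to\mathbf C$ the restriction $u^*$ on functor categories into $\Ab$ is exact with left adjoint $u_!$, so that $\Ext^n_{\mathbf D}(\underline\Z,u^*N)\cong\Ext^n_{\mathbf C}(\mathbb{L}u_!\,\underline\Z,N)$; since $(\mathbb{L}_i u_!\,\underline\Z)(c)\cong H_i(u\downarrow c;\Z)$ this reduces each step to an acyclicity statement about comma categories. For $\nu$ one first observes that the Thomason cochain complex $C^*_{Th}(\Cc,\nu^*D)$, whose $n$-th term is $\prod_{([n],f)\in\Delta/\Cc}D(f_1\cdots f_n)$, is term by term and differential by differential the original Baues--Wirsching cochain complex; hence (i) is precisely the identification $H^*_{BW}(\Cc,D)\cong\Ext^*_{F\Cc}(\underline\Z,D)$ of~\cite{BW}, which amounts to $\mathbb{L}\nu_!\,\underline\Z=\underline\Z$, i.e.\ to every comma category $\nu\downarrow g$ ($g\in F\Cc$) having $\Z$-acyclic nerve. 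For $\pi$ one uses that it is a Grothendieck opfibration with discrete fibre $\Hom_\Cc(a,b)$ over $(a,b)$, so that each connected component of $\pi\downarrow(a,b)$ has a terminal object; thus $\pi_!$ is exact and $\pi_!\underline\Z=\Z\Cc$, which yields (ii).

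For $p\colon\Cc^{\op}\times\Cc\to\Cc$ the comma category $p\downarrow c$ is $\Cc^{\op}\times(\Cc/c)$, and since $\Cc/c$ has a terminal object the colimits computing $\mathbb{L}p_!$ collapse, giving $\mathbb{L}p_!\,(\Z\Cc)=\underline\Z$; this delivers the first two isomorphisms of (iii), and the last one, $\Ext^n_\Cc(\underline\Z,F)\cong{\lim}^n_\Cc F$, is Remark~\ref{remten}. For (iv) I would regard $q^*L$ as a $\Cc$-module, apply (iii) to obtain $H^n(\Cc,L)\cong{\lim}^n_\Cc q^*L$, and note that $q^*L$ inverts every morphism of $\Cc$, hence is a local coefficient system, so that ${\lim}^n_\Cc q^*L$ is the simplicial cohomology of the nerve $\Ner\Cc$ with these local coefficients; the comparison for the localization functor then gives ${\lim}^n_\Cc q^*L\cong{\lim}^n_{\pi_1\Cc}L$ (see~\cite{BW}), while the identification with $H^n(B\Cc,L)$ also falls out directly from the $\Delta/\Cc$-description, since $\nu^*\pi^*p^*q^*L$ is a local system on $\Delta/\Cc$ and $B(\Delta/\Cc)\simeq B\Cc$ by Latch~\cite{L}.

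The homology statements dual to (i)--(iv) are obtained by running the same argument over Theorem~\ref{THTor}, with products, limits and right Kan extensions replaced throughout by coproducts, colimits and left Kan extensions. I expect the main obstacle to be the $\nu$-step, namely proving $\mathbb{L}\nu_!\,\underline\Z=\underline\Z$, equivalently the contractibility of the comma categories $\nu\downarrow g$ of ``refinements of the factorization $g$''. I would attack this either by exhibiting an explicit contraction of $\mathrm{id}_{\nu\downarrow g}$ through a zig-zag of natural transformations, or by identifying $\nu\downarrow g$ with the category of simplices of a manifestly contractible simplicial set; in practice this combinatorial input is exactly what is already established in~\cite{BW}, so for the write-up one simply invokes their computation. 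A secondary delicate point is the behaviour of the localization functor $q$ in (iv), whose comma categories are coverings of $B\Cc$; there the cleanest route to $H^n(B\Cc,L)$ is the one through $\Delta/\Cc$ and Latch's theorem.
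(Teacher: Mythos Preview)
Your proposal is correct and arrives at the same conclusions, but the route differs from the paper's in an instructive way. The paper's proof is essentially a chain of citations: for (i) it observes (as you do) that the Thomason cochain complex of $\nu^*D$ coincides with the Baues--Wirsching cochain complex, invoking \cite[Proposition~2.8]{Wei2} for the equality of cosimplicial replacements and \cite[Proposition~8.5]{BW} for the $\Ext$-identification; for (ii)--(iv) it simply appeals to Theorem~\ref{THExt} together with the known $\Ext$/$\lim$ descriptions of the classical theories recorded in Remark~\ref{remten}, and reads naturality off the commutative ladder. By contrast, you set up a uniform mechanism---the change-of-base isomorphism $\Ext^n_{\mathbf D}(\underline{\Z},u^*N)\cong\Ext^n_{\mathbf C}(\mathbb{L}u_!\underline{\Z},N)$ and the pointwise formula $(\mathbb{L}_iu_!\underline{\Z})(c)\cong H_i(u\downarrow c;\Z)$---and reduce each step to a concrete acyclicity statement about comma categories. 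This buys you a self-contained argument that does not lean on the external identifications the paper cites, and it makes transparent exactly where the nontrivial input lies (the $\nu$-step, as you note). The cost is that you must actually verify those acyclicity claims.

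Two small points of care in your sketch. First, in (iii) you write that ``since $\Cc/c$ has a terminal object the colimits computing $\mathbb{L}p_!$ collapse, giving $\mathbb{L}p_!(\Z\Cc)=\underline{\Z}$''; this is correct but elides a second step, since $p\downarrow c\cong\Cc^{\op}\times(\Cc/c)$ and the coefficient $\Z\Cc$ is not constant. After collapsing the $\Cc/c$-direction you are left with $a\mapsto\Z[\Hom_\Cc(a,c)]$ on $\Cc^{\op}$, whose higher derived colimits vanish because this functor is representable (hence projective), with colimit $\Z$; you should say this. Second, for the $\nu$-step your fallback to \cite{BW} is fine, but note that the paper's route via the equality of cosimplicial replacements sidesteps the contractibility of $\nu\downarrow g$ entirely: once one checks that $C^*_{Th}(\Cc,\nu^*D)$ and the Baues--Wirsching complex agree termwise and on differentials, the comparison is immediate and no comma-category combinatorics is needed. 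Either argument is acceptable, but the cosimplicial one is shorter to write down.
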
 

\begin{proof}
The statement of (i) follows directly from the description of the associated cosimplicial replacements for Thomason and Baues-Wirsching natural systems $T$ and $D$, which give rise to the respective Thomason and Baues-Wirsching cochain complexes (see Definition \ref{bwcochn} and for the Baues-Wirsching cochain complex \cite[Definition 1.4]{BW}, \cite[Definition 1.8]{GNT}). Both types of natural systems give rise to the same cosimplicial object (\cite[Proposition 2.8]{Wei2}) and therefore induce isomorphic cohomologies. The stated isomorphisms now follows from \ref{THExt} and Remark \ref{remten} by using the symbolic Hom and its derived Ext-functor in the case of functors from the factorization category $F\Cc$. For $\Aa=\Ab$ the isomorphism between Baues-Wirsching cohomology and the Ext-group was already proved in \cite[Proposition 8.5]{BW}.  

Naturality of the isomorphisms follows now from the naturality diagram above, that is, 
$\nu^*(F\varphi)^*D=(\Delta/\varphi)^*{\nu}^*D$, for any functor $\varphi: \Dd\rightarrow \Cc$ and any Baues-Wirsching natural system $D$ of $\Fun(F\Cc, \Aa)$.

The statements (ii), (iii) and (iv) also follow directly from the characterization of Thomason cohomology as in Theorem \ref{THExt} and as in part (i) using characterizations of the respective cohomology theories via the symbolic Hom and its derived Ext-functors utilizing Remark \ref{remten} for functors from $\Cc^{op}\times \Cc$, $\Cc$, and $\pi_1\Cc$. 

Naturality of the isomorphisms follows similarly to (i), from the naturality ladder above using the appropriate square depending on which case (ii), (iii) or (iv) we are considering.
\end{proof}

Dually, we have a similar characterization for the various homology theories of a small category:

\begin{theorem} Let $\Cc$ be a small category and $\Aa$ a cocomplete abelian category with exact coproducts. Then:
\begin{itemize}
\item[(i)] (Baues-Wirsching) For any natural system $D$ of $\Fun((F\Cc)^{op}, \Aa)$ we have isomorphisms, natural in $\Cc$ and $D$:\\
$H^{BW}_n(\Cc, D)\cong \Tor_n^{(\Delta/\Cc)^{op}}(\underline{\Z}, {\nu^{op }}^* D)\cong \Tor_n^{(F\Cc)^{op}}(\underline{\Z}, D).$
\item[(ii)] (Hochschild-Mitchell) For any $\Cc$-bimodule $M$ of $\Fun(\Cc\times \Cc^{op}, \Aa)$ we have isomorphisms, natural in $\Cc$ and $M$:\\
$H^{HM}_n(\Cc, M)\cong \Tor^{(\Delta/\Cc)^{op}}_n(\underline{\Z}, {\nu^{op}}^*{\pi^{op}}^*M)\cong\Tor_n^{\Cc \times \Cc^{op}}(\underline{\Z}\Cc, M).$
\item[(iii)] For any $\Cc$-module $F$ of $\Fun(\Cc^{op}, \Aa)$ we have isomorphisms, natural in $\Cc$ and $F$:\\
$H_n(\Cc, F)\cong \Tor^{(\Delta/\Cc)^{op}}_n(\underline{\Z}, {\nu^{op}}^*{\pi^{op}}^*{p^{op}}^*F)\cong\Tor_n^{\Cc^{op}}(\underline{\Z}, F)\\\hspace*{1.7cm}\cong{\clim}_n^{\Cc^{op}} F.$
\item[(iv)] For any local system $L$ on $\Cc$ of $\Fun(\pi_1\Cc, \Aa)$ we have isomorphisms, natural in $\Cc$ and $L$:\\
$H_n(\Cc, L)\cong \Tor^{(\Delta/\Cc)^{op}}_n(\underline{\Z}, {\nu^{op}}^*{\pi^{op}}^*{p^{op}}^*{q^{op}}^*L)\cong \Tor_n^{\pi_1\Cc}(\underline{\Z}, L)\\\hspace*{1.7cm}\cong {\clim}_n^{\pi_1\Cc}L\cong H_n(B\Cc, L)$\\
where $B\Cc$ is the classifying space of the small category $\Cc$.
\end{itemize}
\end{theorem}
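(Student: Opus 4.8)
The plan is to run the proof of Theorem~\ref{ThmBW} essentially verbatim in the dual setting, replacing cosimplicial replacements by simplicial replacements, Theorem~\ref{THExt} by Theorem~\ref{THTor}, and the symbolic Hom functor with its derived $\Ext$ by the symbolic tensor product with its derived $\Tor$ from the contravariant half of Remark~\ref{remten}. For part~(i) I would first use Definition~\ref{DefBW} to write $H^{BW}_n(\Cc,D)=H^{Th}_n(\Cc,{\nu^{op}}^*D)$, and then observe that a contravariant Baues--Wirsching natural system $D\in\Fun((F\Cc)^{op},\Aa)$ and its pullback ${\nu^{op}}^*D\in\Fun((\Delta/\Cc)^{op},\Aa)$ determine one and the same simplicial object of $\Aa$, so that their Baues--Wirsching and Thomason chain complexes coincide; this is the homological counterpart of \cite[Proposition~2.8]{Wei2}, proved as in Theorem~\ref{ThmBW}(i) by comparing the explicit chain complexes of Definition~\ref{bwcochn} with the Baues--Wirsching chain complex of \cite{GNT}. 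Applying Theorem~\ref{THTor} over $(\Delta/\Cc)^{op}$, and running the same generalized bar-resolution argument inside $\Fun((F\Cc)^{op},\Ab)$ together with the $\clim_n\cong\Tor_n$ clause of Remark~\ref{remten} (for $\Aa=\Ab$ the latter identification being the one established in \cite{GNT}), then yields the chain of isomorphisms $H^{BW}_n(\Cc,D)\cong\Tor_n^{(\Delta/\Cc)^{op}}(\underline{\Z},{\nu^{op}}^*D)\cong\Tor_n^{(F\Cc)^{op}}(\underline{\Z},D)$.

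For parts~(ii), (iii) and (iv) I would iterate this along the remaining functors $\pi$, $p$, $q$ of the commutative ladder, exactly as in the cohomological case, in each case applying Remark~\ref{remten} to functors on ${\Cc'}^{op}$ for $\Cc'$ the relevant category and identifying $\Tor_n^{{\Cc'}^{op}}(\underline{\Z},-)$ with $\clim_n^{{\Cc'}^{op}}$. The extra input in case~(ii) is a base-change isomorphism: $\pi\colon F\Cc\to\Cc^{op}\times\Cc$ exhibits $F\Cc$ as the category of elements of the bifunctor $\Cc(-,-)$, hence is a discrete fibration, so the left Kan extension of the constant functor $\underline{\Z}$ along $\pi$ is the diagonal bimodule $\underline{\Z}\Cc\colon\Cc^{op}\times\Cc\to\Ab$, $(a,b)\mapsto\Z\Cc(a,b)$, and carries no higher derived terms; this gives $\Tor_n^{(F\Cc)^{op}}(\underline{\Z},{\pi^{op}}^*M)\cong\Tor_n^{\Cc\times\Cc^{op}}(\underline{\Z}\Cc,M)$ for an arbitrary cocomplete $\Aa$ with exact coproducts, the $\Tor$-dual of the identification used in Theorem~\ref{ThmBW}(ii). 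For case~(iv), writing $u=q\circ p\circ\pi\circ\nu\colon\Delta/\Cc\to\pi_1\Cc$, the identification $H_n(\Cc,L)\cong H_n(B\Cc,L)$ is obtained by passing through the simplex category: $H_n(\Cc,L)\cong\clim_n^{(\Delta/\Cc)^{op}}({u^{op}}^*L)$ is the homology of the classifying space $B(\Delta/\Cc)$ with coefficients in the induced local system, and $B(\Delta/\Cc)\simeq B\Cc$ by Latch's theorem \cite{L}, compatibly with the local systems.

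Naturality in $\Cc$ and in the coefficients follows, as in Theorem~\ref{ThmBW}, by applying $(-)^{op}$ to the commutative naturality ladder: for any functor $\varphi\colon\Dd\to\Cc$ one has ${\nu^{op}}^*(F\varphi)^{op*}D=(\Delta/\varphi)^{op*}{\nu^{op}}^*D$, together with the analogous identities obtained by inserting $\pi^{op}$, $p^{op}$, $q^{op}$, so that the comparison isomorphisms commute with the functoriality maps $C^{Th}_*(\varphi,\tau)$ of Thomason homology. I expect the bulk of the work to be bookkeeping rather than anything conceptual---keeping the variances straight under dualization (a homological Baues--Wirsching system lives on $(F\Cc)^{op}$, the projection becomes $p^{op}\colon\Cc\times\Cc^{op}\to\Cc^{op}$, and ``$H_n(\Cc',T)$'' always abbreviates $\clim_n^{\Cc'}T$)---with the one genuinely delicate point being the step in~(iv) identifying these groups with $\clim_n^{\pi_1\Cc}L$, for which one must control the behaviour of derived colimits of local systems along the localization $q\colon\Cc\to\pi_1\Cc=(\Mor\Cc)^{-1}\Cc$.
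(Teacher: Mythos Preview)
Your proposal is correct and follows essentially the same approach as the paper: the paper's own proof consists of a single sentence stating that the arguments are dual to those of Theorem~\ref{ThmBW}, invoking Theorem~\ref{THTor} and the symbolic tensor product/Tor functors of Remark~\ref{remten} over the categories $(F\Cc)^{op}$, $\Cc\times\Cc^{op}$, $\Cc^{op}$ and $\pi_1\Cc$. Your write-up supplies considerably more detail than the paper does---in particular the discrete-fibration/left-Kan-extension justification for the appearance of $\underline{\Z}\Cc$ in (ii), the appeal to Latch's theorem for the classifying-space identification in (iv), and the flag about derived colimits along the localization $q$---but these are elaborations of steps the paper leaves implicit rather than a different route.
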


\begin{proof}
The proofs are just dual to those of the preceding theorem and follow from the characterization of Thomason homology in Theorem \ref{THTor} and using the symbolic tensor product functor and its derived Tor-functor for functors from the various categories $(F\Cc)^{op}$, $\Cc\times \Cc^{op}$, $\Cc^{op}$ and $\pi_1\Cc$.
\end{proof}

As an immediate consequence we obtain the following theorem on the basic functoriality of Baues-Wirsching cohomology due to Muro \cite{Mu} and proved also by Pirashvili and Redondo \cite{pr}:

\begin{corollary} Let $\Cc$ and $\Dd$ be small categories, $\Aa$ a complete abelian category with exact products and $(\varphi, \psi)$ a pair of adjoint functors:
$$\varphi: \Dd \rightleftarrows \Cc :\psi.$$
Then for Baues-Wirsching natural systems $C$ on $\Cc$ and $D$ on $\Dd$ there are natural isomorphisms 
$$H^*_{BW}(\Dd, \varphi^*(C))\cong H^*_{BW}(\Cc, C)$$
$$H^*_{BW}(\Cc, \psi^*(D))\cong H^*_{BW}(\Dd, D).$$
\end{corollary}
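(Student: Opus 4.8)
The plan is to deduce both isomorphisms from Proposition~\ref{adjequiv} by unwinding the definition $H^*_{BW}(\Cc,-)=H^*_{Th}(\Cc,\nu^*(-))$ of Definition~\ref{DefBW}(i) and using the naturality ladder displayed just before Theorem~\ref{ThmBW}. The only ingredient beyond formal bookkeeping is already established in Proposition~\ref{adjequiv} (together with its mirror image for the right adjoint).

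First I would record the compatibility that makes the reduction work. For any functor $\varphi:\Dd\to\Cc$ the leftmost square of the naturality ladder says $F\varphi\circ\nu=\nu\circ\Delta/\varphi$, so precomposition gives, for every Baues--Wirsching natural system $C$ on $\Cc$, the identity $\nu^*(\varphi^*C)=(\Delta/\varphi)^*(\nu^*C)$ of Thomason natural systems on $\Dd$; in the notation of Proposition~\ref{adjequiv} this reads $\nu^*(\varphi^*C)=\varphi^*(\nu^*C)$. With this in hand the first isomorphism is the composite
\[
H^*_{BW}(\Dd,\varphi^*C)=H^*_{Th}(\Dd,\nu^*(\varphi^*C))=H^*_{Th}(\Dd,\varphi^*(\nu^*C))\cong H^*_{Th}(\Cc,\nu^*C)=H^*_{BW}(\Cc,C),
\]
in which the two outer equalities are Definition~\ref{DefBW}(i), the middle equality is the compatibility just recorded, and the isomorphism is Proposition~\ref{adjequiv} applied to the Thomason natural system $T=\nu^*C$ on $\Cc$. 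Naturality in $\Cc$ and in $C$ is inherited from the naturality statements in Definition~\ref{DefBW}(i)/Theorem~\ref{ThmBW}(i) and in Proposition~\ref{adjequiv}.

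For the second isomorphism I would run the same computation with $\psi:\Cc\to\Dd$ in place of $\varphi$ and the roles of $\Cc$ and $\Dd$ exchanged. Here one uses that the adjoint pair $(\varphi,\psi)$ induces an adjoint pair $(\Delta/\varphi,\Delta/\psi)$ between $\Delta/\Cc$ and $\Delta/\Dd$, so that the argument proving Proposition~\ref{adjequiv} --- Theorem~\ref{THExt} together with \cite[Lemma~1.5]{FFPS} applied to the constant Thomason natural system $\underline{\Z}$, now read with $\mathbf{C}=\Delta/\Dd$ and $\mathbf{D}=\Delta/\Cc$ --- equally yields a natural isomorphism $H^*_{Th}(\Cc,\psi^*S)\cong H^*_{Th}(\Dd,S)$ for every Thomason natural system $S$ on $\Dd$. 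Combining this with $\nu^*(\psi^*D)=(\Delta/\psi)^*(\nu^*D)$ (the leftmost square of the ladder for $\psi$) and taking $S=\nu^*D$ gives
\[
H^*_{BW}(\Cc,\psi^*D)=H^*_{Th}(\Cc,\nu^*(\psi^*D))=H^*_{Th}(\Cc,\psi^*(\nu^*D))\cong H^*_{Th}(\Dd,\nu^*D)=H^*_{BW}(\Dd,D),
\]
again natural in $\Cc$, $\Dd$ and $D$.

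The main point to be careful about is precisely this last appeal. Proposition~\ref{adjequiv} is phrased for pullback along the left adjoint $\varphi$, so for the second isomorphism one must check that the comparison underlying it --- invariance of $\lim^*$ under the pair of functors an adjunction induces on simplex categories --- is symmetric enough to cover pullback along the right adjoint $\psi$ as well; this is why \cite[Lemma~1.5]{FFPS} has to be invoked in both of its variances. Everything else in the argument is the formal unravelling of Definition~\ref{DefBW}(i) through the identity $F\varphi\circ\nu=\nu\circ\Delta/\varphi$, and the resulting statement is the functoriality of Baues--Wirsching cohomology of Muro~\cite{Mu} and Pirashvili--Redondo~\cite{pr}.
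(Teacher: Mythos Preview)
Your proof is correct and follows exactly the approach of the paper, which merely writes ``This follows from the definition of Baues--Wirsching cohomology as induced by Thomason cohomology in Definition~\ref{DefBW} and the general functoriality properties of Thomason cohomology in Theorem~\ref{adjequiv}.'' You are in fact more careful than the paper: you spell out the naturality identity $\nu^*(\varphi^*C)=(\Delta/\varphi)^*(\nu^*C)$ coming from the ladder, and you flag that Proposition~\ref{adjequiv} is literally stated only for pullback along the left adjoint, so that the second isomorphism requires re-running its proof (via \cite[Lemma~1.5]{FFPS}) with the roles of $\Cc$ and $\Dd$ exchanged.
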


\begin{proof}
This follows from the definition of Baues-Wirsching cohomology as induced by Thomason cohomology in Definition \ref{DefBW} and the general functoriality properties of Thomason cohomology in Theorem \ref{adjequiv}.
\end{proof}

Dually, we have also a version of this functoriality property for Baues-Wirsching homology using Theorem \ref{adjequivhom}.
\begin{corollary}
Let $\Cc$ and $\Dd$ be small categories, $\Aa$ a cocomplete abelian category with exact coproducts and $(\varphi, \psi)$ a pair of adjoint functors:
$$\varphi: \Dd \rightleftarrows \Cc :\psi.$$
Then for contravariant Baues-Wirsching natural systems $C$ on $\Cc$ and $D$ on $\Dd$ there are natural isomorphisms 
$$H_*^{BW}(\Dd, \varphi^*(C))\cong H_*^{BW}(\Cc, C)$$
$$H_*^{BW}(\Cc, \psi^*(D))\cong H_*^{BW}(\Dd, D).$$
\end{corollary}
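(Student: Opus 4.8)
The plan is to follow the proof of the preceding corollary almost verbatim, exchanging Thomason cohomology for Thomason homology and invoking Proposition~\ref{adjequivhom} in place of Proposition~\ref{adjequiv}. The starting point is Definition~\ref{DefBW}(i), which exhibits Baues-Wirsching homology as a special case of Thomason homology through the functor $\nu\colon\Delta/\Cc\to F\Cc$: for a contravariant Baues-Wirsching natural system $D$ on a small category $\Cc$ one has $H_*^{BW}(\Cc,D)=H_*^{Th}(\Cc,\nu^{op*}D)$. So it suffices to translate the two asserted isomorphisms into statements about Thomason homology and the restriction functors $\nu^{op*}$.

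The bridge between the two worlds is the naturality ladder displayed above (the one used in the proof of Theorem~\ref{ThmBW}), specifically its leftmost square $\nu\circ(\Delta/\varphi)=(F\varphi)\circ\nu$, valid for every functor $\varphi\colon\Dd\to\Cc$. Passing to opposite categories and precomposing with $D$ it yields the identity $\nu^{op*}(\varphi^*D)=(\Delta/\varphi)^{op*}(\nu^{op*}D)$; that is, the contravariant Thomason natural system underlying the Baues-Wirsching pullback $\varphi^*D$ is exactly the pullback along $\Delta/\varphi$ of the contravariant Thomason natural system underlying $D$. Granting this, the first isomorphism is obtained by putting $T=\nu^{op*}C$ on $\Cc$, so that $H_*^{BW}(\Dd,\varphi^*C)=H_*^{Th}(\Dd,\varphi^*T)$ (pullback along $\Delta/\varphi$ on the right), and then applying Proposition~\ref{adjequivhom} to the adjoint pair $(\varphi,\psi)$ to identify the right-hand side with $H_*^{Th}(\Cc,T)=H_*^{BW}(\Cc,C)$. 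The second isomorphism is obtained by repeating the argument with $\psi\colon\Cc\to\Dd$ in place of $\varphi$ and $T=\nu^{op*}D$ on $\Dd$: the same leftmost square, now for $\psi$, gives $H_*^{BW}(\Cc,\psi^*D)=H_*^{Th}(\Cc,\psi^*T)$, and the induced adjunction $(\Delta/\varphi,\Delta/\psi)$ between the simplex categories already invoked in the proof of Proposition~\ref{adjequivhom} forces $H_*^{Th}(\Cc,\psi^*T)\cong H_*^{Th}(\Dd,T)=H_*^{BW}(\Dd,D)$. Naturality of the isomorphisms is inherited from the naturality recorded in Theorem~\ref{THTor} and Proposition~\ref{adjequivhom}, together with the commutativity of the ladder.

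The one point that deserves care --- and where I expect the genuine work to sit --- is making sure the second isomorphism is actually covered by the results quoted: Proposition~\ref{adjequivhom} is phrased for pullback along $\varphi$ only, so one must observe that its proof uses nothing about $\varphi$ beyond its membership in the adjoint pair $(\varphi,\psi)$ and the lift of that adjunction to $(\Delta/\varphi,\Delta/\psi)$, hence applies equally to pullback along $\psi$ (equivalently, one appeals to the homological analogue of \cite[Lemma~1.5, p.~10]{FFPS}, which supplies the isomorphism of $\clim_*$ for either functor of an adjoint pair). The remainder is routine variance bookkeeping: in the contravariant setting one must consistently track the opposite categories $(\Delta/\Cc)^{op}$, $(F\Cc)^{op}$ and the opposite functors $\nu^{op}$, $(\Delta/\varphi)^{op}$, $(F\varphi)^{op}$, but this introduces no idea beyond dualising the proof of the cohomological corollary.
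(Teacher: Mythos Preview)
Your proposal is correct and follows exactly the route the paper takes: the paper's proof is the single line ``Dually, we have also a version of this functoriality property for Baues-Wirsching homology using [Proposition~\ref{adjequivhom}],'' and you have simply unpacked that duality in detail, invoking Definition~\ref{DefBW}(i) and the naturality ladder just as in the cohomological corollary. Your extra care about the second isomorphism (that Proposition~\ref{adjequivhom} applies symmetrically to either member of the adjoint pair) is a point the paper leaves implicit, but the approach is the same.
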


Baues and Wirsching introduced {\it Baues-Wirsching cohomology} in \cite{BW} to study linear extensions of categories.~A homology version was introduced by the authors in \cite{GNT}. 
The cohomology and homology theories for $\Cc$-bimodule coefficients in Theorem \ref{ThmBW} (ii) are the ones normally referred to as {\it Hochschild-Mitchell cohomology} \cite{CWMi, Mi}.  Special cases of Theorem \ref{ThmBW} (iii) have been studied by Watts \cite{Wa}, Roos \cite{Ro} and Quillen \cite{Q}. 

All these cohomology and homology theories enjoy the basic functoriality properties as in the corollaries above. They also generalize cohomology and homology of groupoids $\Gg$, where a groupoid is a category in which all morphisms are isomorphisms. Therefore they in turn also generalize group cohomology and homology, where a group $G$ is interpreted as a category with one object and morphism set $G$. 

\subsection{Functoriality of Thomason (co)homology and spectral sequences}
Given a functor between small categories, we will now derive several spectral sequences for Thomason cohomology and homology. 

Let us consider a functor $u:\Ee\to \Bb$ between small categories and the following commutative ladder of categories and functors:
$$
\xymatrix@C=14pt{\Delta/\Ee\rrto^-{\nu}\dto_{\Delta/u}&&F\Ee\rrto^-{\pi}\dto^{Fu}&&\Ee^{op}\times \Ee\dto^{u^{op}\times u}\rrto^-{p}&& \Ee\dto^u\rrto^{q}&&\pi_1\Ee\dto^{\pi_1u}\rrto^{t}&&\mathbb{1}\ar@{=}[d]\\
   \Delta/\Bb\rrto^-{
{\nu}}&&              F\Bb\rrto^-{
{\pi}}               &&\Bb^{op}\times \Bb\rrto^-{
{p}}&&\Bb\rrto^-{
{q}}            &&\pi_1\Bb\rrto^-{
{t}}      &&\mathbb{1}}
$$


We deal with all the different types of coefficient systems at once in writing $\Cc$ for any of the categories in the lower row of the above ladder and denote by $u':\Delta/\Ee\to \Cc$ the associated composition of functors. Furthermore, let $\Aa$ be a complete abelian category with exact products. 
We get in each case a diagram of the form:
 $$
\xymatrix{
*+++{\Fun(\Delta/\Ee,\Aa)}\ar@<-5pt>[rrrr]_{u'_*} \ar@<5pt>[rrdd]^-{\lim_{\Delta/\Ee}}
&&&& *+++{\Fun(\Cc,\Aa)} \ar@<-5pt>[llll]_{{u'}^*}   \ar@<5pt>[lldd]^-{\lim_{\Cc}} \\ \\
&&\Aa\ar@<5pt>[rruu]^-c \ar@<5pt>[lluu]^-c
}
$$
where $c$ denotes the constant diagram functor, ${u'}^*$ is pre-composition with $u'$, and the other functors in the diagram are the right adjoints of these, given by the limits $\lim_{\Delta/\Ee}$, $\lim_{\Cc}$ and by $u'_*=Ran_{u'}$. 

We obtain a spectral sequence for the derived functors of the composite functor 
$$\llim_{\Delta/\Ee}(-)=\llim_\Cc u'_ *(-)$$
which is an Andr\'e spectral sequence as constructed by the authors in generality in \cite[Section 1.1]{GNT} (see also \cite{An}). Here it converges to the Thomason cohomology of $\Ee$ with coefficients $T$ from $\Fun(\Delta/\Ee,\Aa)$ being a Thomason natural system. Therefore \cite[Theorem 1.2]{GNT} gives a first quadrant cohomology spectral sequence of the form:
$$
E_2^{p,q}\cong H^p(\Cc, Ran^q_{u'}(T))\Rightarrow H^{p+q}_{Th}(\Ee, T)
$$
Here $Ran^q_{u'}$ is the $q$-th right satellite of $Ran_{u'}$, the right Kan extension along the functor $u'$.
Finally, identifying the terms in the $E_2$-page of the spectral sequence using \cite[Corollary 1.3]{GNT} gives us:
$$
E_2^{p,q}\cong H^p(\Cc,  \h^q(-/u', T\circ Q^{(-)}))
\Rightarrow H^{p+q}_{Th}(\Ee, T)
$$
For each object $c$ of $\Cc$ recall that 
$\h^q(c/u', T\circ Q^{(c)})$ is the derived limit
$${\lim}^q\left( c/u'\stackrel{Q^{(c)}}\longrightarrow 
\Delta/\Ee\stackrel{T}\longrightarrow \Aa\right)$$
and $Q^{(c)}:c/u'\to \Delta/\Ee$ is 
the forgetful functor.

In particular cases the $E_2$-page can be simplified.
For example in the case $\Cc=\Delta/\Bb$ with $u'=\Delta/u$ considered above, we get the
following Leray type spectral sequence:

\begin{theorem}\label{H^*T} 
Let $\Ee$ and $\Bb$ be small categories and $u: \Ee\rightarrow \Bb$ a functor. Let $\Aa$ be a complete abelian category with exact products. Given a Thomason natural system $T: \Delta/\Ee\rightarrow \Aa$ on $\Ee$, there is a first quadrant cohomology spectral sequence 
$$E_2^{p,q}\cong H_{Th}^p(\Bb, (R^q \Delta/u_*)(T))\Rightarrow H^{p+q}_{Th}(\Ee, T)$$
which is functorial with respect to natural transformations and where $R^q \Delta/u_*=Ran^q_{\Delta/u}$ is the $q$-th right satellite of $Ran_{\Delta/u}$, the right Kan extension along the induced functor  $\Delta/u$ between the simplex categories.
\end{theorem}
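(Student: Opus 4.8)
The plan is to obtain the spectral sequence as the special case $\Cc=\Delta/\Bb$, $u'=\Delta/u$ of the general Andr\'e spectral sequence constructed above, and then to recognise the resulting $E_2$-term as a Thomason cohomology of $\Bb$.

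First I would specialise the diagram of adjoint functors preceding the statement to $\Cc=\Delta/\Bb$ and $u'=\Delta/u$: here $(\Delta/u)^*:\Fun(\Delta/\Bb,\Aa)\to\Fun(\Delta/\Ee,\Aa)$ is precomposition, its right adjoint $\Delta/u_*=Ran_{\Delta/u}$ exists since $\Aa$ is complete, and the constant diagram functor $\Aa\to\Fun(\Delta/\Ee,\Aa)$ factors through $\Fun(\Delta/\Bb,\Aa)$ via $(\Delta/u)^*$; passing to right adjoints recovers the composite identity ${\lim}_{\Delta/\Ee}={\lim}_{\Delta/\Bb}\circ Ran_{\Delta/u}$ noted above. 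Applying \cite[Theorem 1.2]{GNT} to this composite --- whose running hypothesis, that $Ran_{\Delta/u}$ carries injectives to ${\lim}_{\Delta/\Bb}$-acyclics, is part of the general setup of \cite[Section 1.1]{GNT} --- produces a spectral sequence
$$E_2^{p,q}\cong{\lim}^p_{\Delta/\Bb}\bigl((R^q\Delta/u_*)(T)\bigr)\Rightarrow{\lim}^{p+q}_{\Delta/\Ee}(T),$$
which is first quadrant because ${\lim}^p$ and the right satellites $R^q Ran$ both vanish in negative degrees.

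It then remains to translate both sides into Thomason (co)homology. Theorem \ref{THExt} applied to $\Ee$ gives the abutment, ${\lim}^{p+q}_{\Delta/\Ee}(T)\cong H^{p+q}_{Th}(\Ee,T)$. For the $E_2$-term, note that $(R^q\Delta/u_*)(T)=Ran^q_{\Delta/u}(T)$ is again an object of $\Fun(\Delta/\Bb,\Aa)$, i.e.\ a Thomason natural system on $\Bb$, so Theorem \ref{THExt} applied to $\Bb$ gives ${\lim}^p_{\Delta/\Bb}\bigl((R^q\Delta/u_*)(T)\bigr)\cong H^p_{Th}(\Bb,(R^q\Delta/u_*)(T))$. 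Substituting these identifications yields the stated spectral sequence. Functoriality with respect to natural transformations --- which induce, via $\Delta/-$, natural transformations of the functors $\Delta/u$ and hence natural maps of the associated right Kan extensions --- follows from the functoriality of the Andr\'e spectral sequence construction in \cite{GNT} together with the naturality of the isomorphisms of Theorem \ref{THExt}.

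The principal obstacle is not in this final step, which is bookkeeping once the framework of \cite{GNT} is available, but in ensuring that the two separate appeals to Theorem \ref{THExt} --- one for $\Ee$ supplying the abutment, one for $\Bb$ supplying the $E_2$-page --- are compatible with the edge maps and differentials, so that the outcome is a spectral sequence natural in both $T$ and $u$. This comes down to the fact, already used in the proof of Theorem \ref{THExt}, that the isomorphism $H^*_{Th}(\Cc,-)\cong{\lim}^*_{\Delta/\Cc}(-)$ is induced by an isomorphism of the underlying cochain complexes functorial in the category $\Cc$ and in the coefficient system.
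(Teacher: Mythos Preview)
Your proposal is correct and follows essentially the same route as the paper: the theorem is stated there as the specialisation $\Cc=\Delta/\Bb$, $u'=\Delta/u$ of the general Andr\'e spectral sequence obtained from \cite[Theorem~1.2]{GNT} applied to the factorisation ${\lim}_{\Delta/\Ee}={\lim}_{\Delta/\Bb}\circ Ran_{\Delta/u}$, together with the identification of Theorem~\ref{THExt}. Your write-up is in fact slightly more explicit than the paper's, which leaves the invocation of Theorem~\ref{THExt} for the $E_2$-page and the functoriality argument implicit.
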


With the identification of the terms in the $E_2$-page above we get therefore:

\begin{corollary}\label{H^*T+GZ} 
Let $u: \Ee\rightarrow \Bb$ be a functor between small categories and $\Aa$ a complete abelian category with exact products. 
Let $T:\Delta/\Ee\rightarrow \Aa$ be a Thomason natural system on $\Ee$.
Then there exists a first quadrant cohomology spectral sequence of the form
$$E_2^{p,q}\cong H_{Th}^p(\Bb, \h^q(-/\Delta/u, T\circ Q^{(-)}))\Rightarrow H_{Th}^{p+q}(\Ee, T)$$
which is functorial with respect to natural transformations and where  
$$\h^q(-/\Delta/u, T\circ Q^{(-)})= \llim^q_{-/\Delta/u}(T\circ Q^{(-)}): \Delta/\Bb\rightarrow \Aa.$$
\end{corollary}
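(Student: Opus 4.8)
The plan is to obtain this as a pointwise reformulation of Theorem \ref{H^*T}: the only thing to do is to identify the coefficient system $(R^q\Delta/u_*)(T)=Ran^q_{\Delta/u}(T)$ occurring in its $E_2$-page with the functor $\llim^q_{-/\Delta/u}(T\circ Q^{(-)}):\Delta/\Bb\to\Aa$, and then quote Theorem \ref{H^*T} verbatim for the resulting spectral sequence and its convergence.

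First I would recall Kan's pointwise formula for the right Kan extension along $\Delta/u:\Delta/\Ee\to\Delta/\Bb$. For each object $\beta$ of $\Delta/\Bb$ let $\beta/\Delta/u$ denote the corresponding comma category and $Q^{(\beta)}:\beta/\Delta/u\to\Delta/\Ee$ the forgetful functor. Since $\Aa$ is complete and all the categories involved are small, the limit $\llim_{\beta/\Delta/u}(T\circ Q^{(\beta)})$ exists and is natural in $\beta$, so that $(Ran_{\Delta/u}T)(\beta)\cong\llim_{\beta/\Delta/u}(T\circ Q^{(\beta)})$ as functors on $\Delta/\Bb$.

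Next I would pass to the higher right satellites. This is exactly the content of \cite[Corollary 1.3]{GNT}, which identifies $Ran^q_{\Delta/u}$ with the pointwise $q$-th derived limit over the comma categories, so that
$$(R^q\Delta/u_*)(T)(\beta)\cong\llim^q_{\beta/\Delta/u}(T\circ Q^{(\beta)})=\h^q(\beta/\Delta/u,\,T\circ Q^{(\beta)}).$$
This is precisely the specialization to the case $\Cc=\Delta/\Bb$, $u'=\Delta/u$ of the general identification discussed in the paragraph before Theorem \ref{H^*T}. Substituting this functor into the $E_2$-term $H^p_{Th}(\Bb,(R^q\Delta/u_*)(T))$ of Theorem \ref{H^*T} yields the spectral sequence as stated, and functoriality with respect to natural transformations is inherited directly from that of Theorem \ref{H^*T}, together with the naturality of Kan's formula in $T$ and in the comma categories.

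The point requiring care is the step from the underived pointwise Kan extension formula to its derived version, i.e.\ that the right satellites $Ran^q_{\Delta/u}$ are still computed pointwise as derived limits over the comma categories $\beta/\Delta/u$; this is where the hypotheses on $\Aa$ (completeness with exact products) are genuinely used, and it is supplied by \cite[Corollary 1.3]{GNT}. Granting that, the remainder of the argument is a purely formal substitution into Theorem \ref{H^*T}, so I do not expect any real difficulty beyond correctly matching notation for the comma categories and forgetful functors.
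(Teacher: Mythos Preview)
Your proposal is correct and follows exactly the paper's approach: the corollary is stated in the paper without a separate proof, merely as the specialization to $\Cc=\Delta/\Bb$, $u'=\Delta/u$ of the identification $Ran^q_{u'}(T)\cong \h^q(-/u',T\circ Q^{(-)})$ from \cite[Corollary 1.3]{GNT} already discussed before Theorem~\ref{H^*T}. Your write-up makes the pointwise Kan extension step and the passage to derived functors more explicit than the paper does, but the logic is identical.
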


In the special situation that the functor $u$ is part of an adjoint pair of functors or an equivalence of categories this spectral sequence collapses in the $E_2$-page and we simply recover Proposition \ref{adjequiv}. 

For the particular case $\Cc=F\Bb$ and 
$$u'=
{\nu}\circ \Delta/u=Fu\circ \nu:
\Delta/\Ee\longrightarrow\Cc
$$ with a given Thomason natural system $T: \Delta/\Ee\rightarrow \Aa$ on $\Ee$ we get the following spectral sequence as a special case:
$$E_2^{p,q}\cong H^p(F\Bb, R^q (Fu\circ \nu)_*(T))\Rightarrow H^{p+q}_{Th}(\Ee, T)$$ 
which after identifying the various terms involved gives a spectral sequence for Thomason cohomology in terms of Baues-Wirsching cohomology:

\begin{theorem}\label{Thm+BW}
Let $\Ee$ and $\Bb$ be small categories and $u: \Ee\rightarrow \Bb$ a functor. Let $\Aa$ be a complete abelian category with exact products. Given a Thomason natural system $T: \Delta/\Ee\rightarrow \Aa$, there is a first quadrant cohomology spectral sequence 
$$E_2^{p,q}\cong H_{BW}^p(\Bb, R^q (Fu\circ \nu)_*(T))\Rightarrow H^{p+q}_{Th}(\Ee, T)$$
which is functorial with respect to natural transformations and where $R^q(Fu\circ \nu)_*=Ran^q_{Fu\circ \nu}$ is given as the $q$-th right satellite of $Ran_{Fu\circ \nu}$, the right Kan extension along $Fu\circ \nu$.
\end{theorem}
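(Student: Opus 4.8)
The plan is to derive Theorem~\ref{Thm+BW} as an instance of the general Andr\'e spectral sequence machinery already set up in the preceding discussion, specialised to the functor $u'=Fu\circ\nu=\nu\circ\Delta/u:\Delta/\Ee\to F\Bb$. First I would invoke the general construction: taking $\Cc=F\Bb$ in the diagram of adjoint functors displayed above, the composite $\llim_{\Delta/\Ee}=\llim_{F\Bb}\circ\,(u')_*$ has a Grothendieck-style spectral sequence, and by \cite[Theorem 1.2]{GNT} this yields a first-quadrant spectral sequence
$$E_2^{p,q}\cong H^p(F\Bb,\,Ran^q_{u'}(T))\Rightarrow H^{p+q}(\Delta/\Ee,\,T).$$
By Theorem~\ref{THExt} the abutment $H^{p+q}(\Delta/\Ee,T)$ is exactly $H^{p+q}_{Th}(\Ee,T)$, so no further work is needed on the right-hand side.

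Next I would identify the $E_2$-term with Baues--Wirsching cohomology. Since $H^p(F\Bb,-)$ here means $\llim^p_{F\Bb}=\Ext^p_{F\Bb}(\underline\Z,-)$ (Remark~\ref{remten}), and by Theorem~\ref{ThmBW}(i) this coincides with $H^p_{BW}(\Bb,-)$ for a Baues--Wirsching natural system on $\Bb$, it remains only to check that $Ran^q_{u'}(T)$ is indeed an object of $\Fun(F\Bb,\Aa)$ — which it is by construction, being the $q$-th right satellite of the right Kan extension along $u':\Delta/\Ee\to F\Bb$. Writing $u'=Fu\circ\nu$ gives $Ran^q_{u'}=R^q(Fu\circ\nu)_*$ in the notation of the statement, so the $E_2$-page reads $E_2^{p,q}\cong H^p_{BW}(\Bb,\,R^q(Fu\circ\nu)_*(T))$, as claimed.

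Finally I would address functoriality with respect to natural transformations. This follows exactly as in the analogous claims for Theorems~\ref{H^*T} and~\ref{ThmBW}: a natural transformation of functors $u\Rightarrow v$ induces compatible morphisms on all the categories in the commutative ladder (in particular on $\Delta/u$, $Fu$, hence on $Fu\circ\nu$), the right Kan extensions and their satellites are functorial in this data, and $H^*_{BW}(\Bb,-)$ is a functor on $\thomnat$-type data by the discussion following Definition~\ref{bwcochn}; one then appeals to the functoriality statement already built into \cite[Theorem 1.2]{GNT}. The only genuine point requiring care — and the step I expect to be the main obstacle — is verifying that the identification $u'=Fu\circ\nu=\nu\circ\Delta/u$ is compatible with the adjoint-functor setup so that the general Andr\'e spectral sequence of \cite{GNT} applies verbatim: one must confirm that $(u')_*=Ran_{u'}$ exists (guaranteed by completeness of $\Aa$ with exact products and smallness of the categories) and that the relevant adjunctions $((u')^*,(u')_*)$ and $(c,\lim_{F\Bb})$ compose correctly, so that the two-step derived-functor spectral sequence has the stated $E_2$ and abutment. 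Everything else is a routine transcription of the general results established earlier in the paper.
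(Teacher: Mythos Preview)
Your proposal is correct and follows essentially the same approach as the paper: the theorem is obtained as the special case $\Cc=F\Bb$, $u'=Fu\circ\nu=\nu\circ\Delta/u$ of the general Andr\'e spectral sequence already displayed, with the $E_2$-term identified as Baues--Wirsching cohomology via Theorem~\ref{ThmBW}(i) and the abutment as Thomason cohomology via Theorem~\ref{THExt}. The paper in fact presents this derivation in the paragraph immediately preceding the theorem statement rather than as a separate proof, so your write-up is if anything more explicit about the identifications than the paper itself.
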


Again we can identify the terms in the $E_2$-page of the spectral sequence with concrete fiber data and get:

\begin{corollary}\label{Cor+BW}
Let $u: \Ee\rightarrow \Bb$ be a functor between small categories. Let $\Aa$ be a complete abelian category with exact products. Given a Thomason natural system $T: \Delta/\Ee\rightarrow \Aa$, there is a first quadrant cohomology spectral sequence 
$$E_2^{p,q}\cong H_{BW}^p(\Bb, \h^q(-/Fu\circ \nu, T\circ Q^{(-)}))\Rightarrow H_{Th}^{p+q}(\Ee, D)$$
which is functorial with respect to natural transformations and where  
$$\h^q(-/Fu\circ \nu, T\circ Q^{(-)})= \llim^q_{-/Fu\circ \nu} (T\circ Q^{(-)}): F\Bb\rightarrow \Aa.$$
\end{corollary}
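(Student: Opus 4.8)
The plan is to combine Theorem \ref{Thm+BW} with the fiberwise description of satellites of right Kan extensions from \cite[Corollary 1.3]{GNT}. Theorem \ref{Thm+BW} already supplies the first quadrant spectral sequence $E_2^{p,q}\cong H^p_{BW}(\Bb, R^q(Fu\circ\nu)_*(T))\Rightarrow H^{p+q}_{Th}(\Ee,T)$ together with its functoriality, so the only remaining task is to rewrite the coefficient functor $R^q(Fu\circ\nu)_*(T):F\Bb\to\Aa$ in concrete fiber terms.

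To do this I would apply \cite[Corollary 1.3]{GNT} to the single functor $v:=Fu\circ\nu:\Delta/\Ee\to F\Bb$, exactly as in the passage from Theorem \ref{H^*T} to Corollary \ref{H^*T+GZ} (note $Fu\circ\nu=\nu\circ\Delta/u$ by commutativity of the leftmost square of the naturality ladder above). That result computes the value of the $q$-th right satellite $R^qv_*=Ran^q_v$ of the right Kan extension along $v$, at an object $c$ of $F\Bb$, as the $q$-th derived limit of the composite $c/v\xrightarrow{Q^{(c)}}\Delta/\Ee\xrightarrow{T}\Aa$, where $Q^{(c)}$ is the forgetful functor out of the comma category $c/v$. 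Hence $R^q(Fu\circ\nu)_*(T)\cong\h^q(-/Fu\circ\nu,T\circ Q^{(-)})=\llim^q_{-/Fu\circ\nu}(T\circ Q^{(-)})$ as functors $F\Bb\to\Aa$, naturally in $c$; substituting this into the $E_2$-term of Theorem \ref{Thm+BW} yields the spectral sequence in the statement, with convergence to $H^{p+q}_{Th}(\Ee,T)$ unchanged. Functoriality with respect to natural transformations is inherited from that of Theorem \ref{Thm+BW} (ultimately from the functoriality of the Andr\'e spectral sequence in \cite[Section 1.1]{GNT}), since the identification above is natural.

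There is no genuine obstacle here: the corollary is a direct substitution into results already in hand. The one point worth checking carefully is that the hypotheses of \cite[Corollary 1.3]{GNT} are met in this instance --- the source category $\Delta/\Ee$ is small and $\Aa$ is complete with exact products, exactly as required --- and that it is genuinely the satellites of the single Kan extension $Ran_{Fu\circ\nu}$ that appear in Theorem \ref{Thm+BW}, so that no Grothendieck-type decomposition of $Ran_{Fu\circ\nu}$ through $Ran_\nu$ and $Ran_{\Delta/u}$ is being tacitly invoked. Once this is confirmed, the fiber data $\llim^q_{-/Fu\circ\nu}(T\circ Q^{(-)})$ is precisely what \cite[Corollary 1.3]{GNT} delivers.
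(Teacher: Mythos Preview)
Your proposal is correct and matches the paper's approach exactly: the corollary is obtained from Theorem \ref{Thm+BW} by identifying the satellites $R^q(Fu\circ\nu)_*(T)$ via \cite[Corollary 1.3]{GNT} as the derived limits over the comma categories $-/Fu\circ\nu$, just as Corollary \ref{H^*T+GZ} was obtained from Theorem \ref{H^*T}. The paper does not spell out a separate proof for this corollary beyond the sentence ``Again we can identify the terms in the $E_2$-page of the spectral sequence with concrete fiber data and get,'' so your argument is in fact more explicit than what appears there.
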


In the special case that $T=\nu^*D$ for a Baues-Wirsching natural system $D: F\Ee\rightarrow \Aa$ on $\Ee$ we can further identify the $E_2$-term of the spectral sequence in Theorem \ref{Thm+BW} and get a spectral sequence for Baues-Wirsching cohomology:
$$E_2^{p,q}\cong H_{BW}^p(\Bb, R^q (Fu_*)(D))\Rightarrow H^{p+q}_{BW}(\Ee, D) $$

Similarly, we can identify the $E_2$-page in terms of local fiber data as in Corollary \ref{Cor+BW}.~This spectral sequence describes directly the functorial behaviour of Baues-Wirsching cohomology for functors between small categories.
(see also \cite[Theorem 1.14]{GNT}).\\

Dually, with similar arguments as above we can derive homological versions of the spectral sequences for Thomason homology of categories.

\begin{theorem}\label{H_*T}  Let $\Ee$ and $\Bb$ be small categories and $u: \Ee\rightarrow \Bb$ a functor. Let $\Aa$ be a cocomplete abelian category with exact coproducts. Given a contravariant Thomason natural system $T: \Delta/\Ee\rightarrow \Aa$ on $\Ee$, there is a third quadrant homology spectral sequence 
$$E^2_{p,q}\cong H^{Th}_p(\Bb, (L_q \Delta/u^*)(T))\Rightarrow H_{p+q}^{Th}(\Ee, T)$$
which is functorial with respect to natural transformations, where $L_q \Delta/u^*=Lan_q^{\Delta/u}$ is the $q$-th left satellite of $Lan^{\Delta/u}$, the left Kan extension along $\Delta/u$.
\end{theorem}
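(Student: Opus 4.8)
The plan is to dualize, essentially verbatim, the construction carried out in the paragraphs preceding Theorem~\ref{H^*T}. The starting point is Theorem~\ref{THTor}, which lets me reinterpret Thomason homology as a left derived colimit, $H_n^{Th}(\Ee,T)\cong\clim_n^{(\Delta/\Ee)^\op}T$, and likewise $H_p^{Th}(\Bb,-)\cong\clim_p^{(\Delta/\Bb)^\op}(-)$. The functor $u\colon\Ee\to\Bb$ induces $\Delta/u\colon\Delta/\Ee\to\Delta/\Bb$, hence on the categories of contravariant coefficient systems an adjoint pair formed by restriction along $\Delta/u$, namely $\Fun((\Delta/\Bb)^\op,\Aa)\to\Fun((\Delta/\Ee)^\op,\Aa)$, together with its left adjoint the left Kan extension $Lan^{\Delta/u}$ (denoted $\Delta/u^*$ in the statement). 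Adjoining the colimit functors over $(\Delta/\Ee)^\op$ and $(\Delta/\Bb)^\op$ and their right adjoints, the constant-diagram functors, yields the mirror image of the diagram of functor categories displayed before Theorem~\ref{H^*T} with all adjunctions reversed; its essential content is the factorization $\clim^{(\Delta/\Ee)^\op}\cong\clim^{(\Delta/\Bb)^\op}\circ\,Lan^{\Delta/u}$.

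Next I would feed this factorization into the homological version of the Andr\'e spectral sequence of a composite of functors constructed in \cite[Section~1.1]{GNT} (see also \cite{An}), in the same way that the text above feeds the factorization $\llim_{\Delta/\Ee}\cong\llim_{\Delta/\Bb}\circ\,\Delta/u_*$ into its cohomological counterpart. The acyclicity hypothesis needed --- that $Lan^{\Delta/u}$ carries enough acyclic objects for the left derived colimit over $(\Delta/\Ee)^\op$ to $\clim^{(\Delta/\Bb)^\op}$-acyclics --- holds because restriction along $\Delta/u$ is exact, so its left adjoint $Lan^{\Delta/u}$ preserves projectives, and projectives are colimit-acyclic. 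The resulting third quadrant spectral sequence reads
$$E^2_{p,q}\cong\clim_p^{(\Delta/\Bb)^\op}\!\bigl((L_q\Delta/u^*)(T)\bigr)\ \Rightarrow\ \clim_{p+q}^{(\Delta/\Ee)^\op}(T),$$
where $L_q\Delta/u^*=Lan_q^{\Delta/u}$ is the $q$-th left satellite of $Lan^{\Delta/u}$. Re-identifying the outer colimits by Theorem~\ref{THTor}, $\clim_p^{(\Delta/\Bb)^\op}(-)\cong H_p^{Th}(\Bb,-)$ and $\clim_{p+q}^{(\Delta/\Ee)^\op}(T)\cong H_{p+q}^{Th}(\Ee,T)$, gives precisely the asserted spectral sequence.

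Functoriality with respect to natural transformations would follow exactly as in the cohomological case: a natural transformation of functors $\Ee\to\Bb$ induces a coherent family of transformations of the simplex categories, hence of every functor in the diagram above, and the composite-functor spectral sequence of \cite{GNT} is natural in such data. The step I expect to require the most care is the acyclicity check of the second paragraph together with checking that the construction of \cite{GNT} applies in the present generality: since $\Aa$ is only assumed cocomplete with exact coproducts and need not have enough projectives, the derived colimits and the whole spectral sequence must be built from the explicit generalized bar resolution used in Theorem~\ref{THTor} rather than from abstract projective resolutions, and one must verify that $Lan^{\Delta/u}$ interacts with that resolution correctly. This is the exact dual of the corresponding points in the proofs of Theorems~\ref{THExt} and~\ref{THTor} and of the construction preceding Theorem~\ref{H^*T}, and everything else is a routine dualization.
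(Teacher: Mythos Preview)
Your proposal is correct and follows essentially the same approach as the paper: the paper's proof simply states that the result is dual to Theorem~\ref{H^*T} and follows from the dual Andr\'e homology spectral sequence for the higher derived functors of $\clim^{\Delta/\Cc}$, citing \cite{An} and \cite[Appendix II.3]{GZ}. Your write-up spells out this dualization in considerably more detail than the paper does---in particular the factorization $\clim^{(\Delta/\Ee)^{\op}}\cong\clim^{(\Delta/\Bb)^{\op}}\circ Lan^{\Delta/u}$, the acyclicity argument, and the care needed when $\Aa$ lacks enough projectives---but the underlying strategy is identical.
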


\begin{proof}
This is simply dual to the statement of Theorem \ref{H^*T} and follows from the dual Andr\'e homology spectral sequence involving the higher derived functors of $\clim^{\Delta/\Cc} D$ in the description of Thomason homology $H^{Th}_n(\Cc, T)$ (see also \cite{An} and \cite[Appendix II.3]{GZ}).
\end{proof}

We can identify the $E^2$-page of this spectral sequence and get:

\begin{corollary}\label{H_*T+GZ}  Let $\Ee$ and $\Bb$ be small categories and $u: \Ee\rightarrow \Bb$ a functor. Let $\Aa$ be a cocomplete abelian category with exact coproducts. Given a contravariant Thomason natural system $T: \Delta/\Ee\rightarrow \Aa$ on $\Ee$, there is a third quadrant cohomology spectral sequence 
$$E^2_{p,q}\cong H^{Th}_p(\Bb, \h_q(\Delta/u/-, T\circ Q_{(-)}))\Rightarrow H_{p+q}^{Th}(\Ee, T)$$
which is functorial with respect to natural transformations and where  
$$\h_q(\Delta/u/-, T\circ Q_{(-)})= \clim_q^{\Delta/u/-}(T\circ Q_{(-)}): \Delta/\Bb\rightarrow \Aa.$$
\end{corollary}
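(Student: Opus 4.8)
The plan is to obtain the statement from Theorem~\ref{H_*T} by identifying, objectwise, the $q$-th left satellite $L_q\Delta/u^{*}=Lan_q^{\Delta/u}$ occurring there with a derived colimit over a comma category. This is the exact homological dual of the passage from Theorem~\ref{H^*T} to Corollary~\ref{H^*T+GZ}, so the cleanest route is to apply Corollary~\ref{H^*T+GZ} with $\Aa$ replaced by $\Aa^{\op}$ (which is complete with exact products since $\Aa$ is cocomplete with exact coproducts) to the functor $T$ regarded as a system with values in $\Aa^{\op}$, and then dualize; otherwise the argument mirrors \cite[Corollary 1.3]{GNT} in its dual form.

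Concretely, I would first recall the pointwise formula for the (underived) left Kan extension along $\Delta/u:\Delta/\Ee\to\Delta/\Bb$: for a contravariant Thomason natural system $T$ on $\Ee$ and an object $g$ of $\Delta/\Bb$,
$$(Lan^{\Delta/u}T)(g)\;\cong\;\clim^{\Delta/u/g}\bigl(T\circ Q_{(g)}\bigr),$$
where $Q_{(g)}:\Delta/u/g\to\Delta/\Ee$ is the forgetful functor from the comma category. Since $\Aa$ is cocomplete with exact coproducts, the colimit together with the coproducts in its standard presentation is exact, so the identification propagates to left derived functors and gives
$$(L_q\Delta/u^{*})(T)(g)\;\cong\;\clim^{\Delta/u/g}_{q}\bigl(T\circ Q_{(g)}\bigr)\;=\;\h_{q}\bigl(\Delta/u/g,\,T\circ Q_{(g)}\bigr).$$
Letting $g$ vary yields exactly the functor $\h_{q}(\Delta/u/-,\,T\circ Q_{(-)}):\Delta/\Bb\to\Aa$ of the statement; inserting it into the $E^{2}$-page of Theorem~\ref{H_*T} produces the asserted third quadrant homology spectral sequence, and functoriality in natural transformations is inherited from Theorem~\ref{H_*T} together with naturality of the comma-category construction in $g$ and in $u$.

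The only real content to verify is the objectwise computation of the left satellites --- that $Lan_q^{\Delta/u}$ may be evaluated at $g$ as the derived colimit over $\Delta/u/g$ --- which in the cohomological case is \cite[Corollary 1.3]{GNT}. Transporting it across the duality $\Fun(-,\Aa)\leftrightarrow\Fun(-,\Aa^{\op})^{\op}$ turns right Kan extensions and limits into left Kan extensions and colimits, the right satellites into the left satellites, and the comma category $-/\Delta/u$ of Corollary~\ref{H^*T+GZ} into its opposite, recovering the slice $\Delta/u/-$ written here; checking that the forgetful functors $Q^{(-)}$ and $Q_{(-)}$ and the relevant cofinality statements dualize correctly is the place where a little care is needed, but no new idea is required. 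Once this bookkeeping is in place, the cocompleteness and exactness hypotheses on $\Aa$ ensure the derived colimits are well-behaved, and the identification of the $E^{2}$-page, hence the corollary, follows.
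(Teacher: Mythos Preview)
Your proposal is correct and follows essentially the same approach as the paper: start from Theorem~\ref{H_*T} and identify the left satellites $Lan_q^{\Delta/u}(T)$ objectwise with the derived colimits $\clim_q^{\Delta/u/g}(T\circ Q_{(g)})$ over the comma categories, then plug this into the $E^2$-page. The paper's proof is more terse, simply citing \cite{An}, \cite{CE}, \cite{Hu} for the identification $Lan_q^{\Delta/u}(T)\cong \clim^{\Delta/u/\beta}_q T\circ Q_*$ rather than spelling out the duality with the cohomological case via $\Aa^{\op}$ as you do, but the substance is the same.
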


\begin{proof} We can identify the $E^2$-term in the above homology spectral sequence as follows (see \cite{An}, \cite{CE}, \cite{Hu})
$$Lan_q^{\Delta/u}(T)\cong \clim^{\Delta/u/\beta}_q T\circ Q_*$$ which gives the desired homology spectral sequence.
\end{proof}

In the special situation that $u$ is part of an adjoint pair or an equivalence of categories we will recover Proposition \ref{adjequivhom}.

Finally, in the particular case $\Cc=F\Bb$ with $u'=
{\nu}\circ \Delta/u$ for a contravariant Thomason natural system $T: \Delta/\Ee\rightarrow \Aa$ on $\Ee$ we get the homology spectral sequence 
$$E^2_{p,q}\cong H_p(F\Bb, L_q (
{\nu}\circ \Delta/u)_*(T))\Rightarrow H_{p+q}^{Th}(\Ee, T)$$ 
which after identifying the various terms gives the following spectral sequence for Baues-Wirsching homology, which is dual to the one of Theorem \ref{Thm+BW}:

\begin{theorem} 
Let $\Ee$ and $\Bb$ be small categories and $u: \Ee\rightarrow \Bb$ a functor. Let $\Aa$ be a cocomplete abelian category with exact coproducts. Given a contravariant Thomason natural system $T: \Delta/\Ee \rightarrow \Aa$, there is a third quadrant homology spectral sequence 
$$E^2_{p,q}\cong H^{BW}_p(\Bb, L_q (Fu\circ \nu_*)(T))\Rightarrow H_{p+q}^{Th}(\Ee, T)$$
which is functorial with respect to natural transformations and where $L_q(Fu\circ \nu)_*=Lan_q^{Fu\circ \nu}$ is given as the $q$-th left satellite of $Lan^{Fu\circ \nu}$, the left Kan extension along $Fu\circ \nu$.
\end{theorem}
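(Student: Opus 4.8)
The plan is to run the dual of the argument that produced Theorem~\ref{Thm+BW}: I instantiate the general homological machinery of \cite[Section~1.1]{GNT} — the dual Andr\'e spectral sequence already invoked for Theorem~\ref{H_*T} — this time at the category $\Cc=F\Bb$, using the composite $u'=\nu\circ\Delta/u=Fu\circ\nu\colon\Delta/\Ee\to F\Bb$ read off from the commutative ladder above.

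First I would record the factorisation of the relevant colimit functors. The left Kan extension $(Fu\circ\nu)_*=Lan^{Fu\circ\nu}\colon\Fun((\Delta/\Ee)^{op},\Aa)\to\Fun((F\Bb)^{op},\Aa)$ is left adjoint to the exact precomposition functor $(Fu\circ\nu)^*$, hence preserves projectives; moreover, since the colimit over a category is the left Kan extension along the projection to the terminal category $\one$ and left Kan extensions compose, we obtain
$$\clim^{(\Delta/\Ee)^{op}}(-)\;\cong\;\clim^{(F\Bb)^{op}}\!\bigl((Fu\circ\nu)_*(-)\bigr).$$
By Theorem~\ref{THTor} the left-hand functor computes $H^{Th}_*(\Ee,-)$, so the Grothendieck-type spectral sequence of \cite[Section~1.1]{GNT} attached to this composite of right-exact functors is a third quadrant homology spectral sequence
$$E^2_{p,q}\cong H_p\bigl((F\Bb)^{op},\,L_q(Fu\circ\nu)_*(T)\bigr)\Longrightarrow H^{Th}_{p+q}(\Ee,T),$$
with $L_q(Fu\circ\nu)_*=Lan_q^{Fu\circ\nu}$ the $q$-th left satellite of $Lan^{Fu\circ\nu}$.

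Next I would rewrite the $E^2$-page. By the homological analogue of Theorem~\ref{ThmBW}(i) stated above — equivalently, by combining Theorem~\ref{THTor} along $\nu$ with the identification of $\Tor$ and derived colimits from Remark~\ref{remten} — there is, for every contravariant Baues--Wirsching natural system $D$ on $\Bb$, a natural isomorphism $H^{BW}_p(\Bb,D)\cong\clim_p^{(F\Bb)^{op}}D=H_p((F\Bb)^{op},D)$. Applying this with $D=L_q(Fu\circ\nu)_*(T)$ brings the $E^2$-page into the asserted form $H^{BW}_p(\Bb,\,L_q(Fu\circ\nu)_*(T))$, while convergence to $H^{Th}_{p+q}(\Ee,T)$ is untouched. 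Functoriality with respect to natural transformations is then inherited from the corresponding property of the dual Andr\'e spectral sequence of \cite{GNT}, since the functors $\Delta/\varphi$, $F\varphi$, $\pi_1\varphi$, \dots\ of the naturality ladder turn a given $2$-cell into a morphism between the two composite situations, and the $E^2$-identification just used is itself natural.

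The main obstacle I anticipate is bookkeeping rather than anything conceptual: one must keep careful track of opposite categories and variances throughout, since the coefficient system $T$ is contravariant so everything takes place over $(\Delta/\Ee)^{op}$ and $(F\Bb)^{op}$, and one must check the acyclicity hypothesis of the Grothendieck spectral sequence, namely that $(Fu\circ\nu)_*$ sends a projective resolution of $\underline{\Z}$ in $\Fun((\Delta/\Ee)^{op},\Aa)$ to a $\clim^{(F\Bb)^{op}}$-acyclic complex. This is exactly what the preservation of projectives noted above buys us; with that in hand, the remaining steps are those already packaged in \cite[Section~1.1]{GNT}.
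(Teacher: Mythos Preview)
Your proposal is correct and follows essentially the same approach as the paper: the paper simply states that this is the specialisation of the general dual Andr\'e spectral sequence of \cite[Section~1.1]{GNT} to the case $\Cc=F\Bb$ with $u'=\nu\circ\Delta/u=Fu\circ\nu$, followed by identifying $H_p((F\Bb)^{op},-)$ with $H^{BW}_p(\Bb,-)$, and notes that this is dual to Theorem~\ref{Thm+BW}. Your write-up spells out the underlying Grothendieck spectral sequence mechanics (factorisation of colimits, preservation of projectives) more explicitly than the paper does, but the route is the same.
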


We can again identify the terms in the $E^2$-page of the spectral sequence and obtain:

\begin{corollary}
Let $u: \Ee\rightarrow \Bb$ be a functor between small categories and $\Aa$ a cocomplete abelian category with exact coproducts. Given a contravariant Thomason natural system $T: \Delta/\Ee \rightarrow \Aa$, there is a third quadrant homology spectral sequence 
$$E^2_{p,q}\cong H^{BW}_p(\Bb, \h_q(Fu\circ \nu/-, T\circ Q_{(-)}))\Rightarrow H^{Th}_{p+q}(\Ee, T)$$
which is functorial with respect to natural transformations and where  
$$\h_q(Fu\circ \nu/-, T\circ Q_{(-)})= \clim_q^{Fu\circ \nu/-}(T\circ Q_{(-)}): \Bb^{op}\times \Bb\rightarrow \Aa.$$
\end{corollary}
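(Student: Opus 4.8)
The plan is to deduce this corollary from the homology spectral sequence of the preceding theorem by identifying, up to natural isomorphism, the coefficient systems $L_q(Fu\circ\nu)_*(T)=Lan_q^{Fu\circ\nu}(T)$ appearing on its $E^2$-page with the promised local data $\h_q(Fu\circ\nu/-,T\circ Q_{(-)})$. This is the homological analogue of the identification carried out in the cohomological dual Corollary~\ref{Cor+BW}, and it runs exactly parallel to the intrinsic Thomason-homology version Corollary~\ref{H_*T+GZ}. Throughout we use that $Fu\circ\nu=\nu\circ\Delta/u$ as functors $\Delta/\Ee\to F\Bb$, which holds by the commutative naturality ladder for $\nu$ and $F(-)$, so that it suffices to produce, naturally in $T$ and in $u$, an isomorphism of the coefficient systems on $(F\Bb)^{op}$.

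The heart of the argument is the pointwise formula for the derived left Kan extension. Fix an object $\beta$ of the factorization category $F\Bb$, write $Fu\circ\nu/\beta$ for the comma category of pairs $(x,\,(Fu\circ\nu)(x)\to\beta)$ with $x$ an object of $\Delta/\Ee$, and let $Q_{(\beta)}:Fu\circ\nu/\beta\to\Delta/\Ee$ be the forgetful functor. The underived left Kan extension is computed pointwise as a colimit, $Lan^{Fu\circ\nu}(T)(\beta)\cong\clim^{Fu\circ\nu/\beta}(T\circ Q_{(\beta)})$; equivalently, evaluation $\mathrm{ev}_\beta$ at $\beta$ is exact, and $\mathrm{ev}_\beta\circ Lan^{Fu\circ\nu}$ factors as $\clim^{Fu\circ\nu/\beta}$ precomposed with restriction along $Q_{(\beta)}$, which is an exact functor (being induced by a functor of small categories, it is computed objectwise and hence preserves exactness). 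Therefore the left satellites compose with no correction term (equivalently, the associated Grothendieck spectral sequence degenerates), and one obtains a natural isomorphism
$$Lan_q^{Fu\circ\nu}(T)(\beta)\;\cong\;\clim_q^{Fu\circ\nu/\beta}\bigl(T\circ Q_{(\beta)}\bigr),$$
the higher colimits and satellites existing and behaving as expected because $\Aa$ is cocomplete with exact coproducts (see \cite{An}, \cite{CE}, \cite{Hu}). Letting $\beta$ vary over $F\Bb$, these isomorphisms are natural and assemble the assignment $\beta\mapsto\clim_q^{Fu\circ\nu/\beta}(T\circ Q_{(\beta)})$ into a contravariant Baues-Wirsching natural system on $\Bb$, that is, a functor on $(F\Bb)^{op}$, which is exactly the kind of coefficient system accepted by $H^{BW}_p(\Bb,-)$.

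Substituting this isomorphism into the $E^2$-page of the spectral sequence of the preceding theorem then yields the asserted spectral sequence
$$E^2_{p,q}\cong H^{BW}_p\bigl(\Bb,\,\h_q(Fu\circ\nu/-,\,T\circ Q_{(-)})\bigr)\Rightarrow H^{Th}_{p+q}(\Ee,T).$$
Functoriality with respect to natural transformations is inherited from that of the spectral sequence of the preceding theorem, together with the functoriality of the comma-category and satellite constructions and the naturality ladder relating $\nu$ and $F(-)$: a natural transformation between functors $\Ee\rightrightarrows\Bb$ induces compatible comparison functors between the comma categories $Fu\circ\nu/-$, hence compatible maps of the local derived colimits, and these are precisely the maps that make the spectral sequence natural.

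I expect no essential obstacle here. The construction of the Andr\'e-type spectral sequence, the pointwise formula for the derived Kan extensions over small categories with values in a cocomplete abelian category with exact coproducts, and the naturality bookkeeping were all carried out in \cite{GNT} and are already recorded in the dual Corollaries~\ref{Cor+BW} and~\ref{H_*T+GZ}. The one point that will need genuine care is the variance bookkeeping: since $T$ is a \emph{contravariant} Thomason natural system, the Kan extension is taken along $(Fu\circ\nu)^{op}$, and the comma categories, the forgetful functors $Q_{(\beta)}$, and the colimits must all be the appropriate opposite versions, so that the coefficient system produced genuinely lives on $(F\Bb)^{op}$; checking that these directions are consistent with the conventions of the preceding theorem and of Corollary~\ref{Cor+BW} is the main thing to get right.
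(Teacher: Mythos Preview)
Your proposal is correct and follows essentially the same approach as the paper. The paper does not give an explicit proof of this corollary but merely states that the terms in the $E^2$-page of the preceding theorem can be identified as in the dual Corollary~\ref{Cor+BW} and the parallel Corollary~\ref{H_*T+GZ}; your argument carries out precisely this identification via the pointwise formula $Lan_q^{Fu\circ\nu}(T)(\beta)\cong\clim_q^{Fu\circ\nu/\beta}(T\circ Q_{(\beta)})$ for the derived left Kan extension, which is exactly what the paper intends.
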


As in the particular case $T=\nu^*D$ with a contravariant Baues-Wirsching natural system $D: F\Ee\rightarrow \Aa$ on $\Ee$ we can identify the $E^2$-term of this spectral sequence similarly as in Theorem \ref{Thm+BW} to obtain a spectral sequence for Baues-Wirsching homology of the form (see also \cite[Theorem 1.18]{GNT}):
$$E^2_{p,q}\cong H^{BW}_p(\Bb,  L_q (Fu_*)(D))\Rightarrow H_{p+q}^{BW}(\Ee, D)$$   

We can also derive cohomology and homology spectral sequences for all the other types of coefficient systems like $\Ee$-modules, local systems or trivial systems. The proofs go along the same lines of arguments as for Thomason natural systems, using the above ladder of categories and functors and the interpretation of the classical theories as special cases of Thomason cohomology and homology.

\subsection{Thomason (co)homology for Grothendieck fibrations}

We will now study the spectral sequences we have just constructed in the particular situation when the functor between small categories is a Grothendieck fibration.  Applying the spectral sequence constructions of the preceding paragraph to such a situation allows to identify the $E_2$-pages with simpler cohomology and homology data which keep track of the concrete fiber of the functor. These constructions also generalize similar spectral sequences for Baues-Wirsching cohomology as were discussed in \cite{GNT} and the line of arguments here is very similar.

For a given functor $u: \Ee\rightarrow \Bb$  between small categories $\Ee$ and $\Bb$ and a given object $b$ of $\Bb$, recall that the {\it fiber category}
$\Ee_b=u^{-1}(b)$ is the subcategory of $\Ee$ which fits into the following pullback diagram
$$
\xymatrix{\Ee_b\rto
\dto&\Ee\dto^u\\ {*}\rto^b&\Bb.}
$$
The objects of $\Ee_b$ are those objects of  $\Ee$ which map onto $b$ via the functor $u$ and the morphisms are given by those which map to the identity $1_b$.

We have the following notion of a fibration between small categories due to Grothendieck \cite[Expos\'e VI]{SGA}:
\begin{definition}
Let $\Ee$ and $\Bb$ be small categories. A {\it Grothendieck fibration} is a functor $u: \Ee\rightarrow \Bb$ such that the fibers
$\Ee_b=u^{-1}(b)$ depend contravariantly and pseudofunctorially on the objects $b$ of the category $\Bb$. The category $\Ee$ is also called a {\it category fibered over} $\Bb$.
\end{definition}

Let us recall here the following equivalent explicit description of a Grothendieck fibration \cite{SGA}, \cite[Vol.\ 2, 8.3.1]{Bo}. A functor 
$u: \Ee\rightarrow \Bb$ is a Grothendieck fibration if for each object $b$ of $\Bb$ the inclusion functor from the fiber into the comma category
\[
j_b: \Ee_b\to b/u, \quad e\mapsto (e, b\stackrel=\longrightarrow ue)
\]
is coreflexive, i.~e. has a right adjoint left inverse. Grothendieck fibrations are characterized as pseudofunctors. In fact, there is an equivalence of $2$-categories
$$\Fib(\Bb)\stackrel{\simeq}\leftrightarrow \PsdFun(\Bb^{op}, \Cat)$$
between the $2$-category of Grothendieck fibrations $\Fib(\Bb)$ over a small category $\Bb$ and the $2$-category of contravariant pseudofunctors $\PsdFun(\Bb^{op}, \Cat)$ from $\Bb$  to the category $\Cat$ of small categories.

From now on let us assume that the functor $u: \Ee\rightarrow \Bb$ is a Grothendieck fibration and $T: \Delta/\Ee \rightarrow \Aa$ a Thomason natural system on the category $\Ee$, where $\Aa$ is again a complete abelian category with exact products.

We get a local system 
$\h^q_{Th}(G(-), T|_{\Delta/\Ee_{(-)}}): \Bb\rightarrow \Aa$ from the associated pseudo\-functor $G: \Bb^{op} \rightarrow \Cat$ by assigning to every object $b$ of the category $\Bb$ the $q$-th Thomason cohomology of the category $G(b)$
$$\h^q_{Th}(G(-), T|_{\Delta/\Ee_{(-)}}): \Bb\rightarrow \Aa, \,\,\, b\mapsto H_{Th}^q(G(b), T|_{\Delta/\Ee_b})$$
where the coefficients are given by the Thomason natural system
$T|_{\Delta/\Ee_b}: \Delta/\Ee_b\rightarrow \Aa$.

For each object $b$ of the base category $\Bb$ 
we get now a cartesian diagram
$$
\xymatrix{\Ee_b\dto_{=}\rto^
{j_b}&b/u\dto^{Q^b}\\
\Ee_b\dto\rto^
{i_b}\dto&\Ee\dto^u\\ {*}\rto^b&\Bb.}
$$
Let $R_{b}$ denote the right adjoint functor of the inclusion functor $j_b$
and let $T_b$ denote the Thomason natural system on $b/u$ given by the composition
$$T_b: \Delta/ (b/u) \xrightarrow{\Delta/ R_b} \Delta/\Ee_b \xrightarrow{\Delta/ i_b} \Delta/\Ee \xrightarrow {T} \Aa.$$

The discussion in the last paragraph gives a first quadrant cohomology spectral sequence using the equivalence between Grothendieck fibrations and pseudofunctors:
$$E_2^{p,q}\cong H^p(\Bb, \h^q_{Th}(-/u, T_{(-)}))\Rightarrow H^{p+q}_{Th}(\Ee, T)$$

This spectral sequence is functorial with respect to $1$-morphisms, i.e.~natural transformations between Grothendieck fibrations. It generalizes a similar spectral sequence for Baues-Wirsching cohomology constructed before by Pirashvili-Redondo \cite{pr}.

To summarize, we have constructed the following particular cohomology spectral sequence:
\begin{theorem}\label{H^*T+G} 
Let $\Ee$ and $\Bb$ be small categories and let $u: \Ee\rightarrow \Bb$ be a Grothendieck fibration. Given a Thomason natural system $T: \Delta/\Ee\rightarrow \Aa$ on $\Ee$, where $\Aa$ is a complete abelian category with exact products, there is a first quadrant spectral sequence 
$$E_2^{p,q}\cong H^p(\Bb, \h^q_{Th}(-/u, T_{(-)}))\Rightarrow H^{p+q}_{Th}(\Ee, T)$$
which is functorial with respect to $1$-morphisms of Grothendieck fibrations. 
\end{theorem}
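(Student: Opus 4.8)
The plan is to read off this spectral sequence as the case $\Cc=\Bb$ of the general Leray-type construction of this subsection, and then to identify its $E_2$-coefficients with fibrewise Thomason cohomology by exploiting the coreflexivity of the fibre inclusions $j_b$, which is the defining feature of a Grothendieck fibration. Concretely I would first specialise the general construction (as in Theorem~\ref{H^*T} and Corollary~\ref{H^*T+GZ}, but with $\Cc=\Bb$ in place of $\Cc=\Delta/\Bb$), so that the relevant composite is $u'=u\circ p\circ\pi\circ\nu=p\circ\pi\circ\nu\circ\Delta/u:\Delta/\Ee\to\Bb$. This already produces a first quadrant spectral sequence
$$E_2^{p,q}\cong H^p(\Bb,\h^q(-/u',T\circ Q^{(-)}))\Rightarrow H^{p+q}_{Th}(\Ee,T),$$
functorial with respect to natural transformations, whose $E_2$-coefficient system is the functor $\Bb\to\Aa$, $b\mapsto{\lim}^q_{b/u'}(T\circ Q^{(b)})=Ran^q_{u'}(T)(b)$. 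Everything then reduces to identifying this coefficient system with $\h^q_{Th}(-/u,T_{(-)})$.

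The next step is to describe the comma category $b/u'$. Writing $\ell:\Delta/\Ee\to\Ee$ for the natural ``distinguished vertex'' functor $p\circ\pi\circ\nu$, we have $u'=u\circ\ell$; moreover the projection $Q^b:b/u\to\Ee$ is a discrete opfibration (the comma category $b/u=b\downarrow u$ being the category of elements of $\Bb(b,u(-)):\Ee\to\Set$), so that a simplex of $b/u$ is the same datum as a simplex of $\Ee$ together with a lift of its distinguished vertex to $b/u$. Unwinding this --- keeping careful track of the orientation conventions, or else passing to the opposite fibration --- yields an isomorphism $b/u'\cong\Delta/(b/u)$ under which the forgetful functor $Q^{(b)}$ corresponds to $\Delta/Q^b$. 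Hence, by Theorem~\ref{THExt},
$${\lim}^q_{b/u'}(T\circ Q^{(b)})\;\cong\;{\lim}^q_{\Delta/(b/u)}\bigl(T\circ\Delta/Q^b\bigr)\;=\;H^q_{Th}\bigl(b/u,(Q^b)^{*}T\bigr).$$

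Now I would use that $u$ is a Grothendieck fibration: the fibre inclusion $j_b:\Ee_b\to b/u$ is coreflexive, with right adjoint $R_b$ satisfying $R_b\circ j_b=\mathrm{id}_{\Ee_b}$ and $Q^b\circ j_b=i_b$. Applying Proposition~\ref{adjequiv} to the adjunction $j_b\dashv R_b$ twice --- once to $(Q^b)^{*}T$, using $Q^bj_b=i_b$, and once to $T_b=(i_bR_b)^{*}T$, using $R_bj_b=\mathrm{id}$ --- gives natural isomorphisms
$$H^q_{Th}\bigl(b/u,(Q^b)^{*}T\bigr)\;\cong\;H^q_{Th}\bigl(\Ee_b,T|_{\Delta/\Ee_b}\bigr)\;\cong\;H^q_{Th}(b/u,T_b).$$
So the $E_2$-coefficient system is isomorphic to $b\mapsto H^q_{Th}(b/u,T_b)=\h^q_{Th}(-/u,T_{(-)})$, and equally to the fibrewise Thomason cohomology $b\mapsto H^q_{Th}(\Ee_b,T|_{\Delta/\Ee_b})$ --- the local system attached to the pseudofunctor $G:\Bb^{op}\to\Cat$ of the fibration under the equivalence $\Fib(\Bb)\simeq\PsdFun(\Bb^{op},\Cat)$. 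Functoriality with respect to $1$-morphisms of Grothendieck fibrations then follows since every step is natural: the general Leray spectral sequence is functorial for natural transformations, and a $1$-morphism over $\Bb$ induces compatible functors on the categories $\Delta/\Ee$, $b/u$ and $\Ee_b$ commuting with the coreflections $R_b$, hence a morphism of $E_2$-coefficient systems.

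I expect the main obstacle to be, on the technical side, the comma-category identification $b/u'\cong\Delta/(b/u)$: one must match the vertex selected by $p\circ\pi\circ\nu$ with the vertex at which the comma-category structure map is free, and keep the simplicial nerve orientation consistent throughout (this is the analogue of the Baues--Wirsching arguments in \cite{GNT} and \cite{pr}). On the conceptual side, the care is in organising the two applications of Proposition~\ref{adjequiv} together with the pseudofunctoriality of $G$ so that the identification of the $E_2$-page is genuinely natural in $b$ and in $1$-morphisms, so that the claimed functorial spectral sequence really does assemble. The only place where the fibration hypothesis enters is the coreflexivity of $j_b$.
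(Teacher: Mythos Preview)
Your approach is essentially the paper's: specialise the general Andr\'e/Leray spectral sequence of Section~2.2 to the case $\Cc=\Bb$, $u'=p\circ\pi\circ\nu\circ\Delta/u$, and then identify the $E_2$-coefficient system $b\mapsto{\lim}^q_{b/u'}(T\circ Q^{(b)})$ with $b\mapsto H^q_{Th}(b/u,T_b)$. The paper itself is extremely terse here, simply asserting that ``the discussion in the last paragraph gives a first quadrant cohomology spectral sequence using the equivalence between Grothendieck fibrations and pseudofunctors''; you have supplied the actual content of that identification, namely the isomorphism $b/u'\cong\Delta/(b/u)$ (via the discrete-opfibration description of $Q^b$) together with two applications of Proposition~\ref{adjequiv} to the coreflection $j_b\dashv R_b$. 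You are right to flag the orientation bookkeeping in $b/u'\cong\Delta/(b/u)$ as the one place requiring care: one has to check that the vertex singled out by $p\circ\pi\circ\nu$ is the vertex at which the discrete opfibration $Q^b$ allows free lifting.

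One point worth noting: your double use of Proposition~\ref{adjequiv} yields $H^q_{Th}(b/u,(Q^b)^*T)\cong H^q_{Th}(\Ee_b,T|_{\Delta/\Ee_b})\cong H^q_{Th}(b/u,T_b)$ unconditionally, i.e.\ you have in passing shown that the ``locality'' hypothesis introduced just after this theorem is automatic for Thomason natural systems, and hence that Theorem~\ref{H^*T+Glocal} follows immediately from Theorem~\ref{H^*T+G}. The paper does not make this explicit (it introduces locality by analogy with the genuine $h$-locality condition of \cite{pr} for Baues--Wirsching coefficients); your argument shows that in the Thomason setting the distinction collapses.
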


To identify the $E_2$-term of this spectral sequence with local data of the fiber category we introduce the following notion, corresponding to the property of {\it h-locality} for Baues-Wirsching natural systems introduced in \cite{pr}.

\begin{definition} Let $\Aa$ be a complete abelian category with exact products.
A Thomason natural system $T: \Delta/\Ee\rightarrow \Aa$ is called {\it local} if the adjoint functor $R_{b}$ of the inclusion functor $j_b: \Ee_b\to b/u$ induces an isomorphism in Thomason cohomology
$$H^q_{Th}(b/u, T_b)\cong H^q_{Th}(\Ee_b, T\circ\Delta/i_b)$$
for every $q$ and every object $b$ of the base category $\Bb$, i.~e. we have a natural isomorphism of local coefficient systems
$$\h^q_{Th}(-/u,  T_{(-)})\cong \h^q_{Th}(\Ee_{(-)}, T\circ \Delta/i_{(-)}).$$
\end{definition}

Identifying the $E_2$-page of the above spectral sequence, we get the following spectral sequence:

\begin{theorem}\label{H^*T+Glocal} Let $\Aa$ be a complete abelian category with exact products.
Let $\Ee$ and $\Bb$ be small categories and $u: \Ee\rightarrow \Bb$ be a Grothendieck fibration. Given a local Thomason natural system $T: \Delta/\Ee\rightarrow \Aa$ on $\Ee$, where $\Aa$ is a complete abelian category with exact products, there is a first quadrant spectral sequence 
$$E_2^{p,q}\cong H^p(\Bb, \h^q_{Th}(\Ee_{(-)}, T\circ \Delta/i_{(-)}))\Rightarrow H^{p+q}_{Th}(\Ee, T)$$
with the local coefficient system 
$$\h^q_{Th}(\Ee_{(-)}, T\circ \Delta/i_{(-)}): \Bb\rightarrow \Aa, \,\, b\mapsto H^q_{Th}(\Ee_b, T\circ \Delta/i_b).$$
Furthermore, the spectral sequence is functorial with respect to $1$-morphisms of Grothendieck fibrations.
\end{theorem}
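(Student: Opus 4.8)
The plan is to deduce this from Theorem~\ref{H^*T+G} by reinterpreting its $E_2$-page under the locality hypothesis, leaving the abutment untouched. Theorem~\ref{H^*T+G} already provides a first quadrant spectral sequence
\[
E_2^{p,q}\cong H^p(\Bb,\h^q_{Th}(-/u,T_{(-)}))\Rightarrow H^{p+q}_{Th}(\Ee,T),
\]
functorial in $1$-morphisms of Grothendieck fibrations, so the only task is to replace the coefficient system $\h^q_{Th}(-/u,T_{(-)})$ on the base $\Bb$ by a naturally isomorphic one, and to check that this replacement is compatible with morphisms of fibrations.

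First I would invoke the definition of a local Thomason natural system: for each object $b$ of $\Bb$ the right adjoint left inverse $R_b$ of the inclusion $j_b\colon\Ee_b\to b/u$ induces an isomorphism $H^q_{Th}(b/u,T_b)\xrightarrow{\ \sim\ }H^q_{Th}(\Ee_b,T\circ\Delta/i_b)$, and these are required to assemble into a natural isomorphism of local coefficient systems $\h^q_{Th}(-/u,T_{(-)})\cong\h^q_{Th}(\Ee_{(-)},T\circ\Delta/i_{(-)})\colon\Bb\to\Aa$. Since the cohomology $H^p(\Bb,-)$ of the base category is a functor of the coefficient system (Theorem~\ref{ThmBW}), this isomorphism induces $H^p(\Bb,\h^q_{Th}(-/u,T_{(-)}))\cong H^p(\Bb,\h^q_{Th}(\Ee_{(-)},T\circ\Delta/i_{(-)}))$ for all $p,q$. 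Substituting this into the $E_2$-page of Theorem~\ref{H^*T+G} then yields exactly the spectral sequence claimed, with unchanged abutment $H^{p+q}_{Th}(\Ee,T)$; functoriality in $1$-morphisms of Grothendieck fibrations is inherited from Theorem~\ref{H^*T+G} once the identification of the $E_2$-page is known to be natural in the fibration.

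The hard part---really the only part with genuine content---is establishing that naturality: that the objectwise isomorphisms induced by the $R_b$ are compatible with the transition morphisms of the pseudofunctor $G\colon\Bb^{op}\to\Cat$ underlying $u$, and likewise with $1$-morphisms of fibrations. I expect this to reduce to the naturality of the adjunctions $j_b\dashv R_b$ with respect to the cartesian lifts in $\Ee$, combined with the functoriality of Thomason cohomology recorded in Theorem~\ref{THExt} and Proposition~\ref{adjequiv}; this is precisely what is packaged in the ``i.e.'' clause of the definition of a local Thomason natural system, and once it is spelled out nothing further is needed. Should one prefer to sidestep this bookkeeping, an alternative is to run the comparison one level down, between the double complexes computing the two $E_2$-pages, but routing everything through Theorem~\ref{H^*T+G} and the definition of locality is the most economical approach.
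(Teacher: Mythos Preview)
Your proposal is correct and follows exactly the paper's own approach: the paper's proof simply invokes Theorem~\ref{H^*T+G} together with the definition of a local Thomason natural system to identify the $E_2$-page. Your version is in fact more detailed than the paper's, which dispatches the matter in a single sentence without spelling out the naturality bookkeeping you discuss.
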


\begin{proof} 
The construction of the cohomology spectral sequence in Theorem \ref{H^*T+G} and the definition of a local Thomason natural system $T: \Delta/\Ee\rightarrow \Aa$ on the total category $\Ee$ give the desired identification of the $E_2$-page of the spectral sequence.
\end{proof}

Using the obvious dual notions of all the above constructions we can derive a homology version of the spectral sequence for a Grothendieck fibration:

\begin{theorem}\label{H_*T+Glocal} Let $\Aa$ be a cocomplete abelian category with exact coproducts.
Let $\Ee$ and $\Bb$ be small categories and $u: \Ee\rightarrow \Bb$ be a Grothendieck fibration. Given a colocal contravariant Thomason natural system $T: \Delta/\Ee\rightarrow \Aa$ on $\Ee$, where $\Aa$ is a cocomplete abelian category with exact coproducts, there is a third quadrant spectral sequence 
$$E^2_{p,q}\cong H_p(\Bb, \h_q^{Th}(\Ee_{(-)}, T\circ \Delta/i_{(-)}))\Rightarrow H_{p+q}^{Th}(\Ee, T)$$
with the local coefficient system 
$$\h_q^{Th}(\Ee_{(-)}, T\circ \Delta/i_{(-)}): \Bb\rightarrow \Aa, \,\, b\mapsto \h_q^{Th}(\Ee_b, T\circ \Delta/i_b).$$
Furthermore, the spectral sequence is functorial with respect to $1$-morphisms of Grothendieck fibrations.
\end{theorem}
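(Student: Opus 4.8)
The plan is to obtain this statement by dualising the proof of Theorem~\ref{H^*T+Glocal} throughout: replacing limits by colimits, right Kan extensions along $\Delta/u$ by left Kan extensions, and the symbolic Hom/Ext apparatus of Theorem~\ref{THExt} by the symbolic tensor/Tor apparatus of Theorem~\ref{THTor}. First I would record the input from the fibration hypothesis. Since $u:\Ee\to\Bb$ is a Grothendieck fibration it corresponds, under the equivalence $\Fib(\Bb)\simeq\PsdFun(\Bb^{\op},\Cat)$, to a pseudofunctor $G:\Bb^{\op}\to\Cat$ with $G(b)\simeq\Ee_b$, and for each $b$ the inclusion $j_b:\Ee_b\to b/u$ has a right adjoint left inverse $R_b$. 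Exactly as in the cohomological case this produces, for each object $b$, the contravariant Thomason natural system $T_b:\Delta/(b/u)\xrightarrow{\Delta/R_b}\Delta/\Ee_b\xrightarrow{\Delta/i_b}\Delta/\Ee\xrightarrow{T}\Aa$.

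The key steps, in order, would be: (1) apply the homological André spectral sequence --- the dual of the construction preceding Theorem~\ref{H^*T+G}, built from \cite{GNT} in the form used for Theorem~\ref{H_*T} and Corollary~\ref{H_*T+GZ} --- with base category $\Cc=\Bb$ and $u'$ the composite $\Delta/\Ee\to\Ee\xrightarrow{u}\Bb$, obtaining a third-quadrant spectral sequence $E^2_{p,q}\cong H_p(\Bb,\h_q^{Th}(-/u,T_{(-)}))\Rightarrow H^{Th}_{p+q}(\Ee,T)$ with coefficient system $b\mapsto H^{Th}_q(b/u,T_b)$, which is a genuine local system on $\Bb$ because the assignment $b\mapsto b/u$ is pseudofunctorial and compatible with $G$; this is the homological analogue of Theorem~\ref{H^*T+G}. (2) Invoke the colocality hypothesis on $T$ --- which I take to be the evident homological dual of the locality condition introduced just before Theorem~\ref{H^*T+Glocal}, i.e.\ that $R_b$ induces an isomorphism $H^{Th}_q(b/u,T_b)\cong H^{Th}_q(\Ee_b,T\circ\Delta/i_b)$ for all $q$ and all $b$, natural in $b$, equivalently an isomorphism $\h_q^{Th}(-/u,T_{(-)})\cong\h_q^{Th}(\Ee_{(-)},T\circ\Delta/i_{(-)})$ of local coefficient systems on $\Bb$ --- and substitute it into the $E^2$-page to reach the stated form. (3) Deduce functoriality with respect to $1$-morphisms of Grothendieck fibrations from the functoriality of the André spectral sequence together with the compatibility of the comma-category and fibre constructions with $1$-morphisms.

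I expect the only genuine work to be in step~(1), the homological counterpart of Theorem~\ref{H^*T+G}: one must check that the comma categories produced by the André machinery of Corollary~\ref{H_*T+GZ} --- taken over objects of the base, and over which one now forms \emph{colimits} rather than limits --- may be replaced by the comma categories $b/u$ and ultimately by the fibres $\Ee_b$, so that the André coefficient system descends to a functor on $\Bb$ matching the one coming from $G$. The clean way to see this is that $j_b$, having a right adjoint, is a final functor, so colimits over $b/u$ are computed over $\Ee_b$; combined with the fact that $\Delta/(-)$ preserves adjunctions, Proposition~\ref{adjequivhom} applied to the adjoint pair $(j_b,R_b)$ yields the per-object isomorphism $H^{Th}_*(\Ee_b,j_b^{*}T_b)\cong H^{Th}_*(b/u,T_b)$, where $j_b^{*}T_b=T\circ\Delta/i_b$ since $R_bj_b=\mathrm{id}$. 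The subtle point --- and the reason colocality is imposed as a hypothesis rather than derived --- is that this isomorphism must be compatible with the (pseudo)functorial structure maps of $G$, i.e.\ natural in $b$; establishing that naturality, or in its absence simply postulating it, is exactly the content of colocality, precisely as $h$-locality functions in the Baues--Wirsching case of~\cite{pr}.
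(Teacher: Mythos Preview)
Your proposal is correct and follows essentially the same approach as the paper. The paper gives no proof for this theorem beyond the sentence ``Using the obvious dual notions of all the above constructions we can derive a homology version of the spectral sequence for a Grothendieck fibration'', so your plan---dualize the construction preceding Theorem~\ref{H^*T+G} to obtain a homological analogue with $E^2$-coefficients $b\mapsto H^{Th}_q(b/u,T_b)$, then invoke the colocality hypothesis (the undeclared dual of the locality condition) to identify these with $H^{Th}_q(\Ee_b,T\circ\Delta/i_b)$---is exactly the argument the paper intends, and you have spelled out considerably more of it than the paper does.
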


Using the ladder of categories and functors we can also derive various cohomology and homology spectral sequences for Baues-Wirsching natural systems, bimodules, modules, local systems or trivial systems.
These spectral sequences will then relate the associated classical cohomology and homology theories in a Grothendieck fibration. 

\vspace*{0.3cm} {\it Acknowledgements.} 
The first author was partially supported by the grants 
MTM2010-15831, 
MTM2010-20692, 
and SGR-1092-2009, 
and the third author by 
MTM2010-15831 
and SGR-119-2009. 
The second author would like to thank the Barcelona Algebraic Topology Group at the Universitat Aut{\`o}noma de Barcelona (UAB) for the kind invitation and financial support
under the grant 
SGR-1092-2009. 

\bigskip


\begin{thebibliography}{99}

\bibitem{An} M. Andr\'e, Limites et fibr\'es, {\it C. R. Acad. Sci. Paris, S\'er. A.} {\bf 260} (1965), 756--759.

\bibitem{BW} H. J. Baues, G. Wirsching, Cohomology of small categories, {\it J. Pure Appl. Algebra} {\bf 38} (1985), 187--211.

\bibitem{Bo} F. Borceux, {\em Handbook of Categorical Algebra}, Vol. 1 \& 2, Cambridge University Press 1994.

\bibitem{BK} A. K. Bousfield, D. M. Kan, {\em Homotopy Limits, Completions and Localizations}, Lecture Notes in Math. {\bf 304}, Springer-Verlag, Berlin (1972).

\bibitem{CE} H. Cartan, S. Eilenberg, {\em Homological algebra}, Princeton University Press, Princeton 1956, xv+390 pp.

\bibitem{CWMi} C. C. Cheng, Y.-C. Wu, B. Mitchell, Categories of fraction preserve dimension one, {\it Comm. in Algebra} {\bf 8} (1980), 927--939.

\bibitem{Dr} A. Dress, Zur Spectralsequenz von Faserungen, {\it Invent. Math.} {\bf 3} (1967),  172--178.
	
\bibitem{FFPS} V. Franjou, E. M. Friedlander, T. Pirashvili, L. Schwartz: {\it Rational Representations,
the Steenrod Algebra and Functor Homology}. {\it Panoramas et Synth\`eses},
vol. {\bf 16}, Soc. Math. France, Paris 2003, xxii+132 pp. 

\bibitem{Fr} P. Freyd, {\it Abelian Categories}, Harper \& Row, New York 1964, xi+164 pp.

\bibitem{GZ} P. Gabriel, M.  Zisman,  {\it Calculus of fractions and homotopy
theory}, Ergebnisse der Mathematik und ihrer Grenzgebiete, Band 35 Springer-Verlag New York, Inc., New York  1967 x+168 pp.

\bibitem{GNT} I. G\'alvez-Carrillo, F. Neumann, A. Tonks, Andr\'e spectral sequences for Baues-Wirsching cohomology of categories,  {\it J. Pure Appl. Algebra} {\bf 216} (2012). 2549--2561. 

\bibitem{Gr1} J. W. Gray,  Fibred and cofibred categories, {\it Proc. Conf. Categorical Algebra (La Jolla, Calif., 1965)}, Springer, New York, 1965, 21--83. 


 

\bibitem{SGA}  A. Grothendieck, Rev{\^e}tements {\'e}tales et groupe fondamental. Fasc. II: Expos{\'e}s
 6, 8 {\`a} 11. S{\'e}minaire de G{\'e}om{\'e}trie Alg{\'e}brique (SGA1), 1960/61. Troisi{\`e}me {\'e}dition, corrig{\'e}e {\it Publ. Institut des Hautes {\'E}tudes Scientifiques}, Paris, 1963.
 
\bibitem{Hu} A. A. Husainov, Homological dimension theory of small categories, {\it J. Math. Sci}, Vol. {\bf 110}, No. 1, (2002), 2273--2321.

\bibitem{I} L. Illusie, {\em Complexe Cotangent et D\'eformations II}, Lecture Notes in Math. {\bf 283}, Springer-Verlag, Berlin (1972).

\bibitem{L} D. M. Latch, The uniqueness of homology for the category of small categories, {\it J. Pure Appl. Algebra} {\bf 9} (1977), 221--237.

\bibitem{ML1} S. MacLane, {\it Homology}, Grundlehren der Math. Wiss. {\bf 114}, Springer-Verlag, Berlin, 1963.

\bibitem{ML2} S. MacLane, {\it Categories for the Working Mathematician}, Graduate Texts in Mathematics {\bf 5}, Springer-Verlag, New York, Second Edition, 1998.


\bibitem{Mi} B. Mitchell, Rings with several objects, {\it Advances in Math.} {\bf 8} (1972), 1--161.

\bibitem{Mu} F. Muro, On the functoriality of cohomology of categories, {\it J. Pure Appl. Algebra} {\bf 204} (2006), 455--472.

\bibitem{Ob} U. Oberst, Basiserweiterung in der Homologie kleiner Kategorien, {\it Math. Zeitschr.} {\bf 100} (1967), 36--58.
 
\bibitem{pr} T. Pirashvili, M. J. Redondo, Cohomology of the Grothendieck construction, {\it Manuscr. Math.} {\bf 120} (2006), 151--162.
 
\bibitem{liege} F. Prosmans, Derived limits in quasi-abelian categories, {\it Bull. Soc. Roy. Sci. Li\`ege} {\bf 68} (1999), no. 5-6, 335--401.

\bibitem{Q} D. Quillen, Higher algebraic $K$-theory. I. Algebraic $K$-theory, I: Higher $K$-theories (Proc. Conf., Battelle Memorial Inst., Seattle, Wash., 1972),  pp. 85--147. Lecture Notes in Math. {\bf 341}, Springer, Berlin  1973. 




\bibitem{Ro} J. E. Roos, Sur les foncteurs d\'eriv\'es de $\lim$. Applications, {\it C. R. Acad. Sci. Paris, S\'er. A.} {\bf 252} (1961), 3702--3704.

\bibitem{Wa} C. E. Watts, A homology theory for small categories, 	{\it Proc. Conf. Categorical Algebra (La Jolla, Calif., 1965)}, Springer, New York, 1965, 331--335.


\bibitem{Wei2} C. Weibel, Homotopy ends and Thomason model categories, {\it Selecta Mathematica}, New ser. {\bf 7} (2001), 533--564.

\bigskip


\bigskip

\end{thebibliography}
\end{document}